\DeclareMathAlphabet{\mathpzc}{OT1}{pzc}{m}{it}
\theoremstyle{plain}
\newtheorem{theorem}{Theorem}[section]
\newtheorem*{theorem*}{Theorem}
\newtheorem{maintheorem}{Theorem}
\newtheorem{lemma}[theorem]{Lemma}
\newtheorem{proposition}[theorem]{Proposition}
\newtheorem{corollary}[theorem]{Corollary}
\theoremstyle{definition}
\newtheorem{definition}[theorem]{Definition}
\newtheorem{hypothesis}[theorem]{Hypothesis}
\newtheorem{remarks}[theorem]{Remarks}
\newtheorem{remark}[theorem]{Remark}
\numberwithin{equation}{section}
\numberwithin{figure}{section}
\newcommand{\Res}{\mathrm{Res}} 
\newcommand{\Ind}{\mathrm{Ind}} 
\newcommand{\cInd}{\textrm{c-}\mathrm{Ind}}
\newcommand{\Hom}{\mathrm{Hom}}
\newcommand{\Spec}{\mathrm{Spec}}
\newcommand{\Rep}{\mathrm{Rep}}
\newcommand{\op}{\mathrm{op}}
\newcommand{\Gal}{\mathrm{Gal}}
\renewcommand{\sc}{\mathrm{sc}}
\newcommand{\Cusp}{\mathrm{Cusp}}
\newcommand{\Reg}{\mathrm{Reg}}
\newcommand{\Typ}{\mathrm{Typ}}
\newcommand{\bG}{\mathbf{G}}
\newcommand{\bH}{\mathbf{H}}
\newcommand{\apart}{\mathscr{A}}
\newcommand{\buil}{\mathscr{B}}
\newcommand{\C}{\mathbb{C}}
\newcommand{\red}{\mathrm{red}}
\newcommand{\rH}{\mathrm{H}}
\newcommand{\rec}{\mathrm{rec}}
\newcommand{\iner}{\mathrm{iner}}
\newcommand{\Type}{\mathrm{Type}}
\newcommand{\Irr}{\mathrm{Irr}}
\newcommand{\aff}{\mathrm{aff}}
\begin{document}

\setlength{\parindent}{0pt}
\title[Typical representations and inertial Langlands]{Typical representations, parabolic induction and the inertial local Langlands correspondence}
\author{Peter Latham}
\address{Department of Mathematics and Statistics, University of Ottawa, Ottawa, Canada}
\email{platham@uottawa.ca}
\date{\today}

\begin{abstract}
\noindent We prove a result which provides a link between the decomposition of parabolically induced representations and the Bushnell--Kutzko theory of typical representations. As an application, we show that there exists a well-defined inertial Langlands correspondence which respects the monodromy action of $L$-parameters, under some standard conjectures regarding the local Langlands correspondence. To allow for potential applications of this inertial Langlands correspondence, we also provide a complete construction of the set of typical representations, giving a parametrization of these in terms of the structure of the Bruhat--Tits building of $G$.
\end{abstract}
\keywords{Theory of types, Langlands correspondence, parabolic induction}
\subjclass{22E50, 11F80}
\maketitle
\setlength{\parskip}{12pt}
\setlength{\parindent}{0pt}

\section{Introduction}

In this paper, we show that there exists a well-defined ``inertial local Langlands correspondence'', as long as one assumes some standard conjectures about the existence and properties of the local Langlands correspondence. Recall that the local Langlands correspondence is a conjectural parametrization of the irreducible representations of a $p$-adic group $G$ in terms of \emph{$L$-parameters}, which are objects closely related to continuous Galois representations (see Section \ref{sec:l-params} for a precise definition).

The inertial Langlands correspondence is a modification of the local Langlands correspondence which tracks only the information which occurs ``on the compact level''---it replaces $L$-parameters with the restrictions to the compact inertia subgroup, and replaces irreducible\footnote{In the main body of the paper, we consider only \emph{regular} representations due to non-regular representations introducing additional technical complications. We overlook this distinction during the introduction.} representations with (objects closely related to) their \emph{types}, in the sense of \cite{BushnellKutzko1998}.

The existence of the inertial Langlands correspondence not only provides an affirmative answer to the natural question of whether the many available explicit constructions translate over the local Langlands correspondence, but is also of arithmetic significance. Most notably, the inertial Langlands correspondence is a key pre-requisite to the formation of the Breuil--M\'{e}zard conjecture \cite{BreuilMezard2002,EmertonGee2014}, which is a central problem in modern algebraic number theory, the partial solution of which has already found numerous applications---for example, Kisin's proof of (many cases of) the 2-dimensional Fontaine--Mazur conjecture \cite{Kisin2009} is largely based on establishing the corresponding cases of the Breuil--M\'{e}zard conjecture.

Each of these applications relies on the inertial Langlands correspondence for $\mathbf{GL}_N(F)$, the existence and fine properties of which have been understood for quite some time. However, if one wishes to investigate similar questions within the more general context of the Langlands programme for an arbitrary $p$-adic group, then one requires a similar understanding of the inertial Langlands correspondence for such groups. This is something of a problematic topic within the literature---the term ``inertial Langlands correspondence'' actually refers to \emph{two} (closely related) folklore conjectures. The author is not aware of an exposition of either anywhere in the literature, and in fact one of the two is not provably well-defined, given the current state of the art.

To explain this distinction, recall that an $L$-parameter for $G$ consists of a continuous representation of the Weil--Deligne group $W_F\ltimes\C$, taking image in the Langlands dual group $^LG$. The action of $\C$ contained within such a representation is the \emph{monodromy action} of the $L$-parameter; the role of the monodromy action is to distinguish between distinct irreducible representations with equivalent supercuspidal supports.
\begin{itemize}
\item The most basic form of the inertial Langlands correspondence is a canonical map which assigns to each Bushnell--Kutzko type a map $I_F\rightarrow{}^LG$, where $I_F\subset W_F$ is the inertia group. The existence of this map is not a problem; given the existence of the local Langlands correspondence, together with a construction of types, deducing the existence of this inertial correspondence is an exercise in the definitions. The issue here is that one forgets the monodromy action, which leads to a somewhat coarse correspondence---for applications such as the Breul--M\'{e}zard conjecture, one must keep track of the monodromy action. For this reason, we refer to this construction as the \emph{weak inertial Langlands correspondence}.
\item One then expects the existence of the ``strong'' inertial Langlands correspondence. This is entirely conjectural aside from the cases of $\mathbf{GL}_N(F)$ and $\mathbf{SL}_N(F)$, with even these being poorly represented in the literature\footnote{At the end of Section \ref{sec:inertial}, we provide an extended discussion of exactly what is currently known in these cases, and to what extent it is available in published form.}. It predicts that by replacing Bushnell--Kutzko types with a well-chosen set of \emph{typical representations} (in the sense of \cite{BushnellKutzko1998}), one may refine the weak correspondence to a natural map which associates to each typical representation a representation $I_F\rightarrow{}^LG$, together with a monodromy action. This is something of a widely-held folklore belief within the community, but cannot be clearly stated given our current understanding of parabolically induced representations---there is no hope of precisely defining \emph{which} typical representations one should consider.
\end{itemize}

In this paper, we present what appears to be an extremely satisfactory remedy to the situation. We make a minor concession, in allowing the domain of the inertial Langlands correspondence to consist not of typical representations, but of certain \emph{explicitly defined} finite sets of typical representations. In doing so, one obtains a canonical map $\varphi$ which associates to each irreducible representation $\pi$ of $G$ the (finite-modulo-conjugacy) set of typical representations which are contained in $\pi|_K$, as $K$ ranges over the maximal compact subgroups of $G$. Call a set $X$ of conjugacy classes of typical representations \emph{admissible} if it lies in the image of $\varphi$. Our main result is the following (see Theorem \ref{thm:inertial} for a precise statement).

\begin{maintheorem}
Let $X$ be an admissible set of typical representations. For each irreducible representation $\pi$ of $G$ such that $\varphi(\pi)=X$, let $\iner(X,\pi)$ denote the restriction to $I_F\ltimes\C$ of the $L$-parameters corresponding to $\pi$. Then $\iner(X)=\iner(X,\pi)$ is independent of the choice of $\pi$. In particular, one obtains a canonical map $X\mapsto\iner(X)$ which associates to each admissible set $X$ of typical representations the restriction to $I_F\ltimes\C$ of an $L$-parameter.
\end{maintheorem}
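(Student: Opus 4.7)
The plan is to split the claim into two parts: first, that the restriction to $I_F$ of the $L$-parameter depends only on $X$; second, that the monodromy operator on $I_F\ltimes\C$ also depends only on $X$. Both will be obtained by translating, across the assumed local Langlands correspondence, properties of the map $\varphi$ on the $p$-adic side to properties of $L$-parameters on the Galois side.

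For the $I_F$-part, I would invoke the defining property of typical representations: each typical representation $\tau$ determines a unique Bernstein component, namely the one for which $\tau$ is typical. Hence for any admissible $X=\varphi(\pi)$ and any $\tau$ whose class lies in $X$, the Bernstein component of $\pi$ is forced by $\tau$, so $\varphi(\pi_1)=\varphi(\pi_2)$ implies that $\pi_1$ and $\pi_2$ lie in a common Bernstein component $\mathfrak{s}$. Invoking the weak inertial Langlands correspondence --- a formal consequence of the existence of LLC, as explained in the introduction --- then yields equality of the restrictions of the $L$-parameters to $I_F$. At this stage the remaining ambiguity between $\iner(X,\pi_1)$ and $\iner(X,\pi_2)$ is concentrated entirely in the action of $\C$.

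For the monodromy part, I would invoke the paper's main technical result (advertised in the abstract) linking the decomposition of parabolic inductions to typical representations. Fix a cuspidal datum $(M,\sigma)$ whose inertial class is $\mathfrak{s}$; every $\pi\in\mathfrak{s}$ appears as a constituent of $\Ind_P^G(\sigma\otimes\chi)$ for some unramified twist $\chi$. On the Galois side, under the expected compatibility of LLC with parabolic induction, the restriction of the $L$-parameter to $I_F\ltimes\C$ is determined by the pair consisting of the restriction to $I_F$ of the parameter attached to the cuspidal support and a combinatorial choice of which constituent of $\Ind_P^G(\sigma\otimes\chi)$ one is looking at. It therefore suffices to show that $\varphi(\pi)$ recovers the latter combinatorial datum: distinct constituents of $\Ind_P^G(\sigma\otimes\chi)$ must be separated by the sets of typical representations occurring in their restrictions to maximal compact subgroups, and this separation should match, under LLC, the combinatorics of admissible monodromy operators on the Jordan--block decomposition attached to the cuspidal support.

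The main obstacle is this last separation statement. Proving it requires both the explicit parametrization of typical representations via the Bruhat--Tits building (developed elsewhere in the paper) and a careful analysis of how a parabolic subgroup $P$ of $G$ interacts with that building-theoretic construction, so as to detect which constituent of a given parabolic induction a representation $\pi$ is by examining its restriction to the maximal compacts attached to relevant facets. A secondary but unavoidable ingredient is the conjectural compatibility of LLC with parabolic induction, used to pair constituents on the $p$-adic side with admissible monodromy operators on the Galois side; this is an assumption rather than a genuine obstacle, but the bookkeeping required to align the two combinatorial labellings will need care.
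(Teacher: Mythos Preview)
Your two-step outline---first pin down the Bernstein block from $X$, then deal with the monodromy---is exactly the shape of the paper's argument. But within each step your proposal diverges from what is actually needed, once by doing too much and once by doing too little.

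For the monodromy step you overcomplicate matters. The separation statement you correctly identify (distinct constituents of $\Ind_P^G(\sigma\otimes\chi)$ have distinct $\varphi$-values; this is the paper's Theorem~\ref{thm:main}) already finishes the job: if $\varphi(\pi_1)=\varphi(\pi_2)$ and $\sc(\pi_1)=\sc(\pi_2)$ then $\pi_1\simeq\pi_2$, hence $\rec_G(\pi_1)=\rec_G(\pi_2)$, and in particular the monodromies agree. There is no need to set up a combinatorial labelling of monodromy operators on the Galois side and verify that it matches the $p$-adic labelling; that would amount to proving much more than the theorem asks. Relatedly, the proof of Theorem~\ref{thm:main} does \emph{not} rely on the explicit Bruhat--Tits parametrization of typical representations from Section~\ref{sec:construction}; it is a direct building-theoretic argument (Iwahori decompositions for covers, the Levi attached to a facet, and the Iwasawa decomposition), so listing the Section~\ref{sec:construction} construction as a prerequisite is misleading.

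Conversely, you underestimate the reduction to equal supercuspidal supports. Your assertion that $\Res_{I_F\ltimes\C}\rec_G(\pi)$ depends only on $\Res_{I_F}\rec_M(\sigma)$ together with ``which constituent $\pi$ is'' quietly assumes that the unramified twist $\chi$ contributes nothing to the restriction to $I_F\ltimes\C$. This is true, but it is not a formal consequence of Hypothesis~\ref{hyp}: the paper establishes it by working inside Kaletha's parametrization via regular elliptic pairs, using the Jordan decomposition to produce an explicit $1$-cocycle $c_\omega$ (coming from the unramified character $\omega$) with $\rec_G(\pi')=\rec_G(\pi)\cdot c_\omega$ and $c_\omega$ trivial on both $I_F$ and $\C$. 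Only after this does one reduce, without loss of generality, to $\sc(\pi)=\sc(\pi')$ and invoke Theorem~\ref{thm:main}. Your proposal should isolate this step and supply (or at least flag) the argument, rather than absorbing it into ``expected compatibility with parabolic induction''.
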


Our proof takes place entirely on the $p$-adic side of the correspondence, with the difficulty lying in showing that the map $\iner$ is well-defined. This requires us to prove the following result, which may be of interest independent of any applications to the Langlands programme.

\begin{maintheorem}
Suppose that $\pi_1$ and $\pi_2$ are two irreducible representations of $G$ which have the same supercuspidal supports, and suppose furthermore that $\varphi(\pi_1)=\varphi(\pi_2)$. Then $\pi_1\simeq\pi_2$.
\end{maintheorem}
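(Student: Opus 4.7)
The plan is to reduce the statement to a question about simple modules over the Hecke algebra of a type, then recover the required module structure from the set of typical representations contained in restrictions to maximal compacts.

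Since $\pi_1$ and $\pi_2$ share the same supercuspidal support $[M,\sigma]_G$, both representations lie in a single Bernstein component $\mathfrak{s}$. By the Bushnell--Kutzko theory of covers, one may choose a type $(J,\lambda)$ for $\mathfrak{s}$, obtained as a $G$-cover of a type $(J_M,\lambda_M)$ for $\sigma$ on $M$. The functor $\pi \mapsto V(\pi) := \Hom_J(\lambda,\pi)$ then gives an equivalence of categories between $\Rep^{\mathfrak{s}}(G)$ and the category of $\mathcal{H}(G,\lambda)$-modules. It therefore suffices to show that $V(\pi_1)\simeq V(\pi_2)$ as simple $\mathcal{H}(G,\lambda)$-modules.

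The next step is to read off the module-theoretic information encoded by $\varphi(\pi)$. For each maximal parahoric $K$ containing (a conjugate of) $J$, the finite-dimensional Hecke subalgebra $\mathcal{H}(K,\lambda)\subset\mathcal{H}(G,\lambda)$ supported on double cosets inside $K$ controls the $K$-local structure of $\pi$ within the block: via Frobenius reciprocity together with the explicit parametrization of typical representations in terms of the Bruhat--Tits building established earlier in the paper, the typical representations of $K$ appearing in $\pi|_K$ are in bijection with the simple $\mathcal{H}(K,\lambda)$-constituents of $V(\pi)$. Consequently, the equality $\varphi(\pi_1)=\varphi(\pi_2)$ translates into an isomorphism $V(\pi_1)|_{\mathcal{H}(K,\lambda)}\simeq V(\pi_2)|_{\mathcal{H}(K,\lambda)}$ for every such $K$.

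The crux, which I expect to be the main obstacle, is to upgrade these compatible finite-parahoric restrictions into an honest $\mathcal{H}(G,\lambda)$-module isomorphism. The key input will be that the subalgebras $\mathcal{H}(K,\lambda)$, as $K$ ranges over a suitable family of maximal parahorics labelled by vertices of the fundamental chamber, generate $\mathcal{H}(G,\lambda)$ as an algebra; this is essentially a Bruhat--Tits statement analogous to the Iwahori--Matsumoto-style presentation of an affine Hecke algebra by its finite parabolic subalgebras. The building-theoretic parametrization of typical representations developed earlier in the paper is precisely what provides the geometric hook needed to realize this generation. Once it is available, rigidity of simple modules of affine-type Hecke algebras — together with the fact that our restrictions are coherent because they come from the two genuine irreducible representations $\pi_1$ and $\pi_2$ rather than from abstractly prescribed data — forces the isomorphism $V(\pi_1)\simeq V(\pi_2)$, and hence $\pi_1\simeq\pi_2$.
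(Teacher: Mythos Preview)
Your Hecke-algebra approach is genuinely different from the paper's, but it contains two real gaps.

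First, the step from $\varphi(\pi_1)=\varphi(\pi_2)$ to an isomorphism $V(\pi_1)|_{\mathcal{H}(K,\lambda)}\simeq V(\pi_2)|_{\mathcal{H}(K,\lambda)}$ is not justified. By definition, $\varphi(\pi)=X_\pi$ is the \emph{set} of conjugacy classes of typical representations $(K,\tau)$ with $\Hom_K(\tau,\pi)\neq 0$; it carries no multiplicity information. Under your dictionary this tells you only that $V(\pi_1)$ and $V(\pi_2)$ have the same set of simple $\mathcal{H}(K,\lambda)$-constituents, not that they are isomorphic $\mathcal{H}(K,\lambda)$-modules. Nothing in the hypotheses forces $\dim\Hom_K(\tau,\pi_1)=\dim\Hom_K(\tau,\pi_2)$.

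Second, even granting those module isomorphisms, the final ``rigidity'' step is not a theorem. That a family of subalgebras generates $\mathcal{H}(G,\lambda)$, together with abstract isomorphisms of the restrictions of two simple modules to each subalgebra, does not imply the modules are isomorphic: one would need a \emph{single} linear isomorphism simultaneously intertwining all of the subalgebra actions, and your appeal to coherence does not produce one. The observation that the restrictions ``come from genuine irreducible representations'' gives coherence within each $V(\pi_i)$ separately, not a comparison between them.

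The paper sidesteps both issues by arguing directly with the representations rather than through Hecke modules. It takes the facet $\mathscr{F}\subset\buil(G)$ attached to the type, with vertices $y_1,\ldots,y_n$, and uses the hypothesis $\varphi(\pi_1)=\varphi(\pi_2)$ only to embed, for each $i$, the multiplicity-free $\lambda_\Sigma$-isotypic part of $\pi_1|_{G_{y_i}}$ into $\pi_2$. The substantive work is then a Bruhat--Tits computation: using the affine Weyl group one shows that the stabilizers $G_{y_i}$ together with the parabolic $P$ generate $G$ (ultimately via the Iwasawa decomposition at a special vertex of the chamber), and one concludes that $\pi_1$ and $\pi_2$ already intertwine as $G$-representations inside a common irreducible induction from $\langle G_{\mathscr{F}}P\rangle$. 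No Hecke-algebra generation statement and no module-rigidity principle is invoked.
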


This is intriguing in that, while not allowing for a complete description of the Jordan--H\"{o}lder series of a parabolically induced representation, it does allow for a combinatorial description of the irreducible components of such a representation, in terms of sets of typical representations.

In order to apply either this description of (semisimplified) parabolically induced representations, or to make a useful application of the above inertial local Langlands correspondence, one requires a workable description of the various typical representations. Our final result provides just this, showing that the isomorphism classes of typical representations admit a combinatorial-geometric description via Bruhat--Tits theory. Specifically, for each inertial equivalence class $\frak{s}$ and each maximal compact subgroup $K$ of $G$, the $\frak{s}$-typical representations of $K$ occur as components of $\Ind_J^K\ \lambda$, where $(J,\lambda)$ is an $\frak{s}$-type. In Section \ref{sec:construction}, we construct an injective system of subrepresentations $V_i$ of $\Ind_J^K\ \lambda$ such that $\Ind_J^K\ \lambda=\varinjlim V_i$, and show that the maximal elements of $\{V_i\}$ (which are largely determined by a choice of vertex in a certain sub-building of the Bruhat--Tits building of $G$) are exactly the irreducible components of $\Ind_J^K\ \lambda$.

We now summarize the layout of the paper. Section \ref{sec:notation} briefly introduces the key notation which will be used throughout the paper. Section \ref{sec:types} recalls some key definitions from \cite{BushnellKutzko1998}, and Section \ref{sec:kimyu} sketches the construction of types due to Kim--Yu \cite{KimYu2017}. From Section \ref{sec:l-params} onwards, we turn towards the Langlands correspondence, introducing the notion of $L$-parameters, and setting up some useful functors. Section \ref{sec:kaletha} recalls Kaletha's construction \cite{Kaletha2019} of the local Langlands correspondence for regular supercuspidal representations; Section \ref{sec:parabolic} then extends this to regular non-cuspidal representations, under some standard conjectures. Section \ref{sec:typical} extends the notions of Section \ref{sec:types} in order to introduce typical representations, which are then used in Section \ref{sec:inertial} to construct our inertial Langlands correspondence. Finally, Section \ref{sec:construction} provides an explicit description of the set of typical representations.

\textbf{Acknowledgements.} This work was partially supported by an NSERC grant via the University of Ottawa. I thank Monica Nevins for asking the questions which eventually led to these results, and Maarten Solleveld for a number of helpful comments on an earlier version of the paper.

\section{Notation}\label{sec:notation}

Let $F$ be a locally compact field, complete with respect to a non-archimedean discrete valuation, and with finite residue field $\frak{f}$ of odd characteristic $p$. We fix, once and for all, a separable algebraic closure $\bar{F}/F$.

Let $\bG$ be a quasi-split connected reductive algebraic group defined over $F$, and suppose that there exists a finite tamely ramified extension $E/F$ such that $\bG\times_{\Spec\ F}\Spec\ E$ is split. Moreover, we assume that $p$ is coprime to the order of the Weyl group of $\bG\times_{\Spec\ F}\Spec\ E$\footnote{In particular, this hypothesis ensures that all representations and $L$-parameters are \emph{essentially tame} (see \cite{Fintzen2018}).}, as well as to the orders of the algebraic fundamental groups of both $\bG$ and its reductive dual. We set $G=\bG(F)$.

Our approach will be grounded in Bruhat--Tits theory. We write $\buil(G)$ for the Bruhat--Tits building of $G$; it should be understood that this means the \emph{enlarged} Bruhat--Tits building. On the rare occasion where we wish to work with the reduced building $\buil^\red(G)$, we will make this clear. In particular, we denote the canonical projection $\buil(G)\rightarrow\buil^\red(G)$ by $x\mapsto[x]$, and always enclose points in square brackets when we wish to consider them in the reduced building.

Given a point $x\in\buil(G)$, we write $G_x$ for the stabilizer of $G$, and $G_{x,0}$ for the parahoric subgroup contained in $G_x$ with finite index. The Moy--Prasad filtration (see \cite{MoyPrasad1994}) of $G_{x,0}$ will be denoted by $G_{x,r}$, $r\geq 0$, with the convention that $G_{x,r+}$ denotes $\bigcup_{s>r}G_{x,s}$. We write $G_{x,r:s}$ for the quotient $G_{x,s}/G_{x,s}$ whenever $s>r$; this quotient is abelian if $r\geq s/2$.

We denote smooth induction and compact induction by $\Ind_A^B\ -$ and $\cInd_A^B\ -$, respectively. In the cases where these functors coincide, we prefer to denote the functor by $\Ind_A^B\ -$. Moreover, in the case where $B=G$ and $A=P$ is a parabolic subgroup of $G$, then $\Ind_P^G\ -=\cInd_P^G\ -$; in this special case we commit a slight abuse of notation and use $\Ind_P^G\ -$ to denote \emph{normalized} parabolic induction, i.e. we pre-compose with the functor $-\otimes\delta_P^{-1/2}$, where $\delta_P$ is the modular function of $P$.

\section{Bushnell--Kutzko types}\label{sec:types}

In this section, we briefly recap the key definitions which are required in order to work with Bushnell--Kutzko types.

Given an irreducible representation $\pi$ of $G$, one may always find a Levi subgroup $M$ of $G$, together with a supercuspidal representation $\pi_M$ of $M$, such that $\pi$ is isomorphic to a subquotient of $\Ind_P^G\ \pi_M$, for some parabolic subgroup $P$ of $G$ with Levi factor $M$. The pair $(M,\pi_M)$ is uniquely determined up to $G$-conjugacy; one therefore obtains a \emph{supercuspidal support map} $\pi\mapsto\sc(\pi)$, which associates to $\pi$ the $G$-conjugacy class of $(M,\pi_M)$.

Given two pairs $(M,\rho)$ and $(M',\rho')$ consisting of supercuspidal representations of Levi subgroups of $G$, we say that $(M,\rho)$ is \emph{$G$-inertially equivalent} to $(M',\rho')$ if there exists a $g\in G$ and an unramified character $\omega$ of $M'$ such that $^gM=M'$ and $^g\rho\simeq\rho'\otimes\omega$. We write $[M,\rho]_G$ for the $G$-inertial equivalence class of $(M,\rho)$, and $\frak{B}(G)$ for the set of equivalence classes of such pairs. There is an \emph{inertial support} map $\frak{I}:\Irr(G)\rightarrow\frak{B}(G)$ which maps $\pi$ to the inertial equivalence class of $\sc(\pi)$. 

For each class $\frak{s}\in\frak{B}(G)$, consider the full subcategory $\Rep^{\frak{s}}(G)$ consisting of those representations every irreducible subquotient of which has inertial support $\frak{s}$.

\begin{theorem}[\cite{Bernstein1984}]
Each category $\Rep^{\frak{s}}(G)$ is indecomposable, and one has a block decomposition
\[\Rep(G)=\prod_{\frak{s}\in\frak{B}(G)}\Rep^{\frak{s}}(G).
\]
\end{theorem}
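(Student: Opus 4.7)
The plan is to split the statement into two halves: the direct product decomposition, and the indecomposability of each factor. For the decomposition, the key input is the vanishing
\[
\mathrm{Ext}^i_G(\pi_1,\pi_2) = 0 \qquad (i=0,1)
\]
whenever $\pi_1,\pi_2\in\Irr(G)$ satisfy $\frak{I}(\pi_1)\neq\frak{I}(\pi_2)$. The case $i=0$ follows from Schur's lemma combined with uniqueness of supercuspidal support: any nonzero $G$-map would force the supercuspidal supports of $\pi_1$ and $\pi_2$, and hence their inertial supports, to coincide. For $i=1$ I would apply Bernstein's second adjointness to rewrite $\mathrm{Ext}^1$ between parabolically induced representations in terms of Ext groups of Jacquet modules, and then invoke the Bernstein--Zelevinsky geometric lemma, which describes each Jacquet module via a natural filtration whose graded pieces are parabolic inductions from conjugate Levi subgroups. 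Since all such pieces remain inertially equivalent to the original cuspidal datum, representations in distinct blocks cannot interact via $\mathrm{Ext}^1$. Granting this vanishing, for $V\in\Rep(G)$ one lets $V^{\frak{s}}$ be the largest subrepresentation in $\Rep^{\frak{s}}(G)$: the $\mathrm{Hom}$-vanishing makes $\sum_{\frak{s}} V^{\frak{s}}$ a direct sum, and the $\mathrm{Ext}^1$-vanishing forces $V/\bigoplus_{\frak{s}} V^{\frak{s}}$ to have no irreducible subquotients, hence to be zero.

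For the indecomposability of a fixed $\Rep^{\frak{s}}(G)$ I would construct a projective generator and analyse the centre of its endomorphism ring. Writing $\frak{s} = [M,\rho]_G$ and fixing an $\frak{s}$-type $(J,\lambda)$ (the existence of which in full generality is essentially the subject of later sections of this paper), set $\Pi_{\frak{s}} = \cInd_J^G\lambda$. A direct argument combining Frobenius reciprocity with the defining property of a type shows that $\Pi_{\frak{s}}$ is a progenerator of $\Rep^{\frak{s}}(G)$, so that $\Rep^{\frak{s}}(G)$ is equivalent to the category of right modules over $\End_G(\Pi_{\frak{s}})^{\op}$. The centre of this algebra is identified with the coordinate ring of the quotient of the complex torus of unramified twists of $\rho$ by the finite stabiliser of $\rho$ under such twists; this quotient is an irreducible affine variety, and the resulting connected centre yields the indecomposability.

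The genuine difficulty is the $\mathrm{Ext}^1$ vanishing: the $\mathrm{Hom}$ case is essentially formal, and the indecomposability argument is mechanical once a progenerator is in hand, but separating extensions between distinct blocks requires both the geometric lemma and the existence of both adjoints to parabolic induction. In a treatment adapted to the viewpoint of this paper, one could instead argue entirely via types: the central idempotent in the Hecke algebra of $G$ attached to $(J,\lambda)$ projects onto $\Rep^{\frak{s}}(G)$, and mutual orthogonality of these idempotents across distinct $\frak{s}$ simultaneously yields the block decomposition and indecomposability.
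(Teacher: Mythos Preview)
The paper does not give a proof of this theorem at all: it is quoted as background from \cite{Bernstein1984}, with no argument supplied. So there is nothing in the paper to compare your proposal against, and your sketch should be judged on its own terms.

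On those terms, there is a genuine circularity. Your indecomposability argument, and your suggested alternative route to the decomposition, both rely on the existence of an $\frak{s}$-type $(J,\lambda)$. But in this paper the notion of an $\frak{s}$-type is \emph{defined} via the already-constructed category $\Rep^{\frak{s}}(G)$, and the results cited to produce types (Kim--Yu, Fintzen) take the Bernstein decomposition as an input. Moreover, types are only known to exist under the tameness hypotheses imposed later in the paper, whereas Bernstein's theorem is unconditional. Bernstein's own argument uses a different progenerator---parabolic induction of the family of unramified twists of a cuspidal $\rho$ of $M$---and computes the centre of $\Rep^{\frak{s}}(G)$ directly, without any appeal to types.

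Separately, your reduction of the product decomposition to $\mathrm{Ext}^0$ and $\mathrm{Ext}^1$ vanishing between irreducibles in distinct blocks is not sufficient as stated. To conclude that $V/\bigoplus_{\frak{s}} V^{\frak{s}}$ has no irreducible subquotient you would need, for an irreducible $\pi$ with $\frak{I}(\pi)=\frak{s}_0$, the vanishing of $\mathrm{Ext}^1_G(\pi,V^{\frak{t}})$ for every $\frak{t}\neq\frak{s}_0$ and for \emph{arbitrary} $V^{\frak{t}}\in\Rep^{\frak{t}}(G)$, not merely for irreducible $V^{\frak{t}}$; you also implicitly use that the quotient has an irreducible \emph{sub}representation rather than just a subquotient. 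Both gaps can be closed, but they require the finiteness machinery (uniform admissibility, or the explicit construction of central idempotents) that is really the substance of Bernstein's proof.
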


\begin{definition}[\cite{BushnellKutzko1998}]
Let $(J,\lambda)$ be a pair consisting of an irreducible representation $\lambda$ of a compact open subgroup $J$ of $G$. Consider the full subcategory $\Rep_\lambda(G)$ of $\Rep(G)$ consisting of those representations which are generated by their $\lambda$-isotypic subspaces.

We say that $(J,\lambda)$ is an $\frak{s}$\emph{-type} if $\Rep_\lambda(G)=\Rep^{\frak{s}}(G)$.
\end{definition}

Next, we recall the notion of $G$-covers from \cite{BushnellKutzko1998}, which will be crucial to our approach.

Fix a Levi subgroup $M\subset G$ and a supercuspidal representation $\pi_M$ of $M$; write $\frak{s}_M=[M,\pi_M]_M$, and suppose that $(J_M,\lambda_M)$ is an $\frak{s}_M$-type. Let $\frak{s}=[M,\pi_M]_G$.

\begin{definition}
A \emph{$G$-cover} of $(J_M,\lambda_M)$ is a pair $(J,\lambda)$ consisting of an irreducible representation $\lambda$ of a compact open subgroup $J$ of $G$, satisfying the following properties.
\begin{enumerate}[(i)]
\item $J\cap M=J_M$.
\item The restriction to $J_M$ of $\lambda$ is equal to $\lambda_M$.
\item For any parabolic subgroup $P$ of $G$ with Levi factor $M$, unipotent radical $N$, if one write $N^\op$ for the opposite of $N$ then one has that
\[J=(N\cap J)J_M(N^\op\cap J).
\]
\item Both $N\cap J$ and $N^\op\cap J$ are contained in $\ker\lambda$.
\item For any smooth representation $\Pi$ of $G$, the natural map from $V$ to its Jacquet module $\Pi_N$ induces an injection on the $\lambda$-isotypic subspace of $\Pi$.
\end{enumerate}
\end{definition}

The key fact is the following.

\begin{theorem}[\cite{BushnellKutzko1998}]
Suppose that $(J,\lambda)$ is a $G$-cover of the $\frak{s}_M$-type $(J_M,\lambda_M)$. Then $(J,\lambda)$ is an $\frak{s}$-type.
\end{theorem}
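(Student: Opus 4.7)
The plan is to prove $\Rep_\lambda(G) = \Rep^{\frak{s}}(G)$ by establishing both inclusions; by the Bernstein block decomposition recalled above, equality of these subcategories then follows. The easier direction is $\Rep_\lambda(G) \subseteq \Rep^{\frak{s}}(G)$. For an irreducible $\pi$ with $\pi^\lambda \neq 0$, axioms (i) and (ii) identify $\pi^\lambda$ as a subspace of $\pi^{\lambda_M}$ after restricting $\lambda$ to $J_M$, while axiom (v) ensures that the Jacquet restriction $\pi \to \pi_N$ remains injective on this subspace. Thus $(\pi_N)^{\lambda_M} \neq 0$, so some irreducible subquotient $\tau$ of $\pi_N$ satisfies $\tau^{\lambda_M} \neq 0$. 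Since $(J_M, \lambda_M)$ is an $\frak{s}_M$-type, $\tau$ lies in $\Rep^{\frak{s}_M}(M)$, and Casselman's theorem (the supercuspidal support of $\pi$ is read from the Jordan--H\"older constituents of $\pi_N$) then forces $\pi$ to have inertial support $\frak{s}$.

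The reverse inclusion $\Rep^{\frak{s}}(G) \subseteq \Rep_\lambda(G)$ is the substantive one; I would proceed via Hecke algebras. Set $\cH(G, \lambda) = \End_G(\cInd_J^G\ \lambda)$ and $\cH(M, \lambda_M) = \End_M(\cInd_{J_M}^M\ \lambda_M)$; the functor $V \mapsto V^\lambda = \Hom_J(\lambda, V)$ is an equivalence between $\Rep_\lambda(G)$ and right $\cH(G, \lambda)$-modules, with the analogous equivalence over $M$. Exploiting axioms (iii) and (iv), I would construct an injective algebra map $t : \cH(M, \lambda_M) \to \cH(G, \lambda)$ by extending a function $f$ across the Iwahori factorization via $t(f)(nmn') = f(m)$ for $n \in N \cap J$, $n' \in N^{\op} \cap J$, $m$ in the support of $f$, with axiom (iv) guaranteeing this is well-defined on $J$-double cosets. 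Using axiom (v), one then identifies $(\Ind_P^G\ \sigma)^\lambda$ canonically with $\sigma^{\lambda_M}$ as a right $\cH(M, \lambda_M)$-module, via evaluation at the identity coset, noting that axiom (iv) annihilates contributions from all other double cosets in $P \backslash G / J$. Because $\sigma$ is generated by $\sigma^{\lambda_M}$ whenever $\sigma \in \Rep^{\frak{s}_M}(M)$, the parabolic induction $\Ind_P^G\ \sigma$ is then generated by its $\lambda$-isotypic subspace; since every irreducible $\pi \in \Rep^{\frak{s}}(G)$ is a subquotient of such an induction, the conclusion follows.

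The main obstacle is the verification that $t$ is a well-defined injective algebra homomorphism and the accompanying identification $(\Ind_P^G\ \sigma)^\lambda \cong \sigma^{\lambda_M}$ as $\cH(M, \lambda_M)$-modules. This requires a delicate combinatorial analysis of how the Iwahori factorization of $J$ in axiom (iii) interacts with the double coset space $P \backslash G / J$: one must show that only the double cosets $P g J$ with $g \in M$ can support $\lambda$-isotypic functions (this uses (iv) and the triviality of $\lambda$ on unipotent parts), and that the resulting matching of isotypic spaces is $\cH$-equivariant. Controlling this intersection is the technical heart of the Bushnell--Kutzko theory of covers, and is the step where one genuinely needs axiom (v) rather than merely the formal compatibilities (i)--(iv).
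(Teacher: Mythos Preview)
The paper does not supply its own proof of this statement; it is quoted from \cite{BushnellKutzko1998} and used as a black box. Your sketch is therefore being compared against the original Bushnell--Kutzko argument rather than anything in the present paper.

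Your outline follows the Bushnell--Kutzko strategy, and the first direction (via the Jacquet module and axiom (v)) is fine. However, there is a genuine gap in the reverse inclusion. You conclude by saying that $\Ind_P^G\,\sigma$ is generated by its $\lambda$-isotypic subspace, and then that ``since every irreducible $\pi\in\Rep^{\frak{s}}(G)$ is a subquotient of such an induction, the conclusion follows.'' This last step does not follow: the category $\Rep_\lambda(G)$ is \emph{not} known a priori to be closed under subquotients --- that closure is equivalent to $(J,\lambda)$ being a type, which is precisely what you are trying to prove. Concretely, a representation generated by its $\lambda$-isotypic vectors can have irreducible subquotients on which $\lambda$ does not occur, unless you already know $\cInd_J^G\,\lambda$ is projective in the block. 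Your identification $(\Ind_P^G\,\sigma)^\lambda\cong\sigma^{\lambda_M}$ is also stated too quickly; what axiom (v) and the Iwahori decomposition give directly is only an injection in one direction.

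The missing ingredient, and the real content of \cite[\S 7--8]{BushnellKutzko1998}, is the use of a strongly $(P,J)$-positive central element $\zeta$ of $M$: one shows that the element of $\cH(G,\lambda)$ supported on $J\zeta J$ is \emph{invertible}, which upgrades the injection of axiom (v) to an isomorphism $\pi^\lambda\xrightarrow{\sim}(\pi_N)^{\lambda_M}$ for every smooth $\pi$. With this in hand, the functor $\pi\mapsto\pi^\lambda$ becomes exact on the block and the Hecke-module comparison goes through; without it, your final deduction is circular.
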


\section{The Kim--Yu construction}\label{sec:kimyu}

In this section, we briefly recall the key points of the Kim--Yu construction of types \cite{Yu2001,KimYu2017}. We adhere to the notational conventions of \cite[\S 5]{LathamNevins2020}, and refer to \emph{ibid.} for many of the more technical details---in particular, we do not specify how the construction of Heisenberg--Weil representations works, and we largely ignore the subtlety of generic diagrams of embeddings of buildings;  whenever these issues become relevant, we will refer to the corresponding passages in \emph{ibid.}

The reader who is only interested in reaching the statement of our inertial correspondence in Section \ref{sec:inertial} will be able to get away without knowing the precise details of the construction discussed during this section. However, these become essential for the results of Section \ref{sec:construction}.

\begin{definition}
A \emph{datum} is a tuple $\Sigma=(\vec{G},M^0,u,\sigma,\vec{r},\vec{\phi})$, consisting of the following objects.
\begin{enumerate}[(i)]
\item A sequence $\vec{G}=(G^0\subsetneq G^1\subsetneq\cdots\subsetneq G^d=G)$ of tame twisted Levi subgroups of $G$, i.e. subgroups $G^i$ of $G$ which are of the form $G^i=\bG^i(F)$, where $\bG^i\subset\bG$ is a connected reductive subgroup for which there exists a finite tamely ramified extension $E/F$ such that $\bG^i\times_FE$ is a Levi subgroup of $\bG\times_FE$.
\item A (genuine) Levi subgroup $M^0$ of $G^0$.
\item A vertex $u$ contained in $\buil(M^0)$.
\item An irreducible representation $\sigma$ of $G_{u}^0$ such that $(G_u^0,\sigma)$ is an $\frak{s}^0$-type for some inertial equivalence class $\frak{s}^0\in\frak{B}(G^0)$ which consists of depth-zero supercuspidal representations.
\item A sequence $\vec{r}=(0<r_0<r_1<\cdots<r_{d-1}=r_d)$ of real numbers. We set $s_i=r_i/2$ for each $i$.
\item A sequence $\vec{\phi}=(\phi^0,\phi^1,\dots,\phi^d)$ of characters $\phi^i$ of $G^i$, where $\phi^i$ is $G^{i+1}$-generic of depth $r_i$, in the sense of \cite[3.9]{HakimMurnaghan2008}. Moreover, we take $\phi^d$ to be the trivial character.
\end{enumerate}

A \emph{truncated datum} is a tuple of the form $(\vec{G},M^0,u,\vec{r},\vec{\phi})$, i.e. a datum without any specified representation $\sigma$ of $G_x^0$.
\end{definition}

To illuminate the requirement of $\sigma$ set out in point (iv), note that $G_{u}^0/G_{u,0+}^0$ identifies naturally with the points of a disconnected algebraic group over the residue field of $F$, the connected component of which is $G_{u}^0/G_{u,0+}^0$. The requirements of (iv) are equivalent to asking that $\ker\sigma\supset G_{u,0+}^0$ and that, when one views $\sigma$ as a representation of $G_u^0/G_{u,0+}^0$ and restricts it to $G_{u,0:0+}^0$, it becomes isomorphic to a sum of pairwise $G_u^0$-conjugate cuspidal representations of $G_{u,0:0+}^0$.

\begin{remark}
As discussed at length in \cite{KimYu2017}, in order for this definition to work out entirely as desired, one must also specify a \emph{generic diagram of embeddings of buildings}. This is a highly technical notion which will merely play a background role in results we cite. However, it is not difficult to see that there exist \emph{many} such generic diagrams (in fact, almost all choices of embeddings lead to such diagrams). Due to this, we omit it from our notation and refer the reader to \emph{ibid.} for a discussion; it is to be understood throughout that we have fixed a generic diagram of embeddings.
\end{remark}

To such a diagram one associates a great many groups and representations, as detailed in \cite[\S 5]{LathamNevins2020}. For the purposes of this paper, we require the following three groups:
\begin{align*}
J(\Sigma)&=G_{x}^0G_{x,s_{0}}^1\cdots G_{x,s_{d-1}}^d;\\
J_+(\Sigma)&=G_{x:0+}^0G_{x,s_0}^1\cdots G_{x,s_{d-1}}^d;\quad\text{and}\\
H_+(\Sigma)&=G_{x,0+}^0G_{x,s_0+}^1\cdots G_{x,s_{d-1}+}^d.
\end{align*}
Then one has a chain of inclusions $H_+(\Sigma)\subset J_+(\Sigma)\subset J(\Sigma)$, and $H_+(\Sigma)$ and $J_+(\Sigma)$ are both pro-$p$ normal subgroups of $J(\Sigma)$. Moreover, there is a canonical isomorphism $J(\Sigma)/J_+(\Sigma)=G_{u}^0/G_{u,0+}^0$, i.e. we may identify $\sigma$ with an irreducible representation of $J(\Sigma)$ whose kernel contains $J_+(\Sigma)$.

After restricting each character $\phi^i$ to $G_{u,s_{i}+}^i$, one may extend these restricted characters to obtain a sequence $\hat{\phi}^i$ of characters, each of which is defined on $H_+(\Sigma)$; we set $\theta_\Sigma=\bigotimes_{i=0}^d\ \hat{\phi}_i$. We refer to $\theta_\Sigma$ as the \emph{semisimple character} associated to the datum $\Sigma$.

For each character $\hat{\phi^i}$ there exists a canonical (and essentially unique; see \cite[\S 3.3]{HakimMurnaghan2008}) irreducible representation $\kappa_\Sigma^i$ of $J(\Sigma)$ which is isomorphic to a sum of copies of $\hat{\phi}^i$ upon restriction to $H_+(\Sigma)$. The representation $\kappa_\Sigma^i$ is called the \emph{Heisenberg--Weil lift} of $\hat{\phi}_i$. Moreover, if we define $\kappa_\Sigma=\bigotimes_{i=0}^d\kappa_\Sigma^i$, then $\kappa_\Sigma$ is an irreducible representation of $J(\Sigma)$, the restriction to $H_+(\Sigma)$ of which is $\theta_\Sigma$-isotypic.

\begin{definition}
We set $\lambda_\Sigma=\sigma\otimes\kappa_\Sigma$.
\end{definition}

\begin{theorem}[\cite{KimYu2017,Fintzen2018}]
The representation $\lambda_\Sigma$ of $J(\Sigma)$ is irreducible, and there exists an inertial equivalence class $\frak{s}\in\frak{B}(G)$ such that $(J(\Sigma),\lambda_\Sigma)$ is an $\frak{s}$-type.

Moreover, under our hypothesis that $p$ is coprime to the order of the absolute Weyl group of $\bG$, every inertial equivalence class $\frak{s}\in\frak{B}(G)$ admits a type of the form $(J(\Sigma),\lambda_\Sigma)$ for some datum $\Sigma$.
\end{theorem}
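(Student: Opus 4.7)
The plan is to handle the theorem in three stages: irreducibility of $\lambda_\Sigma$, the type property, and exhaustion. For irreducibility, the key observation is that $\sigma$ factors through the finite quotient $J(\Sigma)/J_+(\Sigma) \cong G_{u}^0/G_{u,0+}^0$ and is irreducible there by hypothesis, while $\kappa_\Sigma$ is irreducible on $J(\Sigma)$ by construction --- each factor $\kappa_\Sigma^i$ extends a Heisenberg representation associated to $\hat\phi^i$, and these factors are compatible because the relevant symplectic spaces live on disjoint Moy--Prasad layers. A Clifford-theoretic argument relative to the pro-$p$ normal subgroup $J_+(\Sigma)$ then shows $\sigma \otimes \kappa_\Sigma$ is irreducible: the restriction to $J_+(\Sigma)$ is a multiple of a single irreducible Heisenberg representation $\eta$, so any invariant subspace has the form $V \otimes W_\eta$ with $V$ invariant under the $\sigma$-action on $J(\Sigma)/J_+(\Sigma)$, forcing $V$ to be trivial or total.

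For the type property, I would proceed by induction on the length $d$ of the tower, showing at each step that the partially constructed pair is a $G^{i+1}$-cover of the previous one. The base case $d=0$ is the statement that $(G_{u}^0, \sigma)$ is a depth-zero $\frak{s}^0$-type on $G^0$, which is classical (Morris, Moy--Prasad). For the inductive step $i \to i+1$, let $\Sigma_i$ denote the truncated datum retaining only $G^0, \ldots, G^i$; the pair $(J(\Sigma_{i+1}), \lambda_{\Sigma_{i+1}})$ is obtained from $(J(\Sigma_i), \lambda_{\Sigma_i})$ by adjoining the layer $G_{x,s_i}^{i+1}$ and twisting by the Heisenberg--Weil lift $\kappa_\Sigma^i$. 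One then verifies the five Bushnell--Kutzko covering conditions relative to a parabolic $P \subset G^{i+1}$ with Levi factor $G^i$: conditions (i)--(iv) reduce to the Iwahori factorization of Moy--Prasad groups together with the triviality of $\hat\phi^i$ on the relevant unipotent subgroups by construction. Iterating the covering theorem of Section \ref{sec:types} then produces the type at the top. For exhaustion, given $\frak{s} = [M,\pi_M]_G$, Fintzen's theorem \cite{Fintzen2018} supplies a Yu datum $\Sigma_M$ for $\pi_M$ internal to $M$; extending its twisted Levi tower by adjoining $G$ at the top and keeping the remaining data unchanged yields a datum $\Sigma$ for $G$ whose associated type covers the $M$-type and is therefore an $\frak{s}$-type.

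The hard step will be condition (v) of the cover definition, namely the injectivity of the Jacquet module map on the $\lambda_{\Sigma_{i+1}}$-isotypic subspace. This reduces to a Hecke algebra computation showing that the $G^{i+1}$-intertwining of $\lambda_{\Sigma_{i+1}}$ is supported on elements of $G^i$, so that no non-trivial intertwiner originates from the unipotent radical of $P$. This is where the $G^{i+1}$-genericity of $\phi^i$ enters essentially: the coadjoint orbit of $d\phi^i$ is $G^{i+1}$-regular and its stabilizer is precisely $G^i$, which kills off all potential unipotent intertwiners after the appropriate Mackey decomposition. A secondary technical point, which I would largely suppress following the excerpt's convention, is the coherent choice of generic diagrams of embeddings of buildings across the tower; this is bookkeeping but is essential to ensure the various Moy--Prasad subgroups line up so that the truncated data really give subpairs of $\Sigma$.
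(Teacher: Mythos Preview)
The paper does not prove this theorem; it is stated as a citation of \cite{KimYu2017,Fintzen2018}, so there is no paper-proof to compare against. That said, your sketch of how the cited results go contains a genuine conceptual error that is worth flagging.

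Your inductive step for the type property proposes to verify the Bushnell--Kutzko covering axioms ``relative to a parabolic $P\subset G^{i+1}$ with Levi factor $G^i$''. But the $G^i$ are \emph{twisted} Levi subgroups: they become Levi only after a tamely ramified base change, and in general there is no $F$-parabolic of $G^{i+1}$ with $G^i$ as Levi factor (think of an unramified elliptic torus inside $\GL_2$). So the five cover conditions cannot even be formulated along the tower $G^0\subset\cdots\subset G^d$. In the Kim--Yu argument the twisted Levi sequence is used to build $J(\Sigma)$ and $\lambda_\Sigma$ via the Heisenberg--Weil machinery, while the cover structure runs in an entirely different direction: one takes the genuine Levi $M=Z_G(Z(M^0))$ of $G$ (as the paper records just after the theorem), and shows that $(J(\Sigma),\lambda_\Sigma)$ is a $G$-cover of its restriction to $J(\Sigma)\cap M$. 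The intertwining computation you allude to, and the role of genericity of $\phi^i$, are indeed the heart of the matter, but they feed into Yu's supercuspidality/intertwining results, not into a cover relation between $G^i$ and $G^{i+1}$.

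Your exhaustion sketch inherits the same confusion. Given a Yu datum $\Sigma_M$ for $\pi_M$ on $M$ with tower $M^0\subset\cdots\subset M^d=M$, one does not obtain a datum for $G$ by ``adjoining $G$ at the top''---that would require a generic character of $M$ that you do not have, and would place $M$ as a twisted Levi of $G$ rather than a genuine one. Instead one must produce twisted Levis $G^i$ of $G$ with $G^i\cap M=M^i$ (roughly $G^i=Z_G(Z(M^i)^\circ)$), keep the same $\vec r$ and extend the $\phi^i$ accordingly, and recognise $M^0$ as a genuine Levi of $G^0$; the resulting $(J(\Sigma),\lambda_\Sigma)$ then covers the $M$-type by the Kim--Yu argument.
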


The construction of \cite{KimYu2017} actually produces covers, in the sense of the previous section. Given a datum $\Sigma=(\vec{G},M^0,u,\sigma,\vec{r},\vec{\phi})$, one can write $M$ for the $G$-centralizer of the centre of $M^0$; this is a Levi subgroup of $G$. The restriction of $\lambda_\Sigma$ to $J(\Sigma)_M:=J(\Sigma)\cap M$ is irreducible, and $(J(\Sigma),\lambda_\Sigma)$ is a $G$-cover of $(J(\Sigma)_M,\lambda_\Sigma)$.

For convenience, we will equip the set of all types of the form $(J(\Sigma),\lambda_\Sigma)$ with a very simple equivalence relation---we say that $(J(\Sigma),\lambda_\Sigma)$ and $(J(\Sigma'),\lambda_{\Sigma'})$ are equivalent if they are both types for the same inertial equivalence class, i.e. if one has that $\Rep_{\lambda_\Sigma}(G)=\Rep_{\lambda_{\Sigma'}}(G)$. It is worth remarking that, via the results of \cite{HakimMurnaghan2008}, we know rather more about what the ``correct'' equivalence relation to impose on the set of data is , but the above equivalence relation is the natural one for our intended application.

\section{$L$-parameters}\label{sec:l-params}

We fix, once and for all, a separable algebraic closure $\bar{F}/F$, and write $W_F\subset\Gal(\bar{F}/F)$ for the Weil group and $I_F\subset W_F$ for the inertia subgroup. Write $\bG^\vee$ for the complex reductive dual of $\bG$, and $G^\vee=\bG^\vee(\C)$; the Langlands dual group is then the semidirect product $^LG=W_F\ltimes G^\vee$, containing $G^\vee$ as its identity component. In particular, since $G^\vee$ is normal in $^LG$, one obtains a natural action of $W_F$ on $G^\vee$.

We will also require the \emph{Weil--Deligne group}. This is the semidirect product $W_F':=W_F\ltimes\C$, where $w\in W_F$ acts on $x\in\C$ as $wxw^{-1}=\|w\|x$. Observe that we have a diagram of inclusions:
\[\xymatrix{&W_F'&\\
W_F\ar[ur]&&I_F'\ar[ul]\\
&I_F\ar[ur]\ar[ul]&
}\]
\begin{definition}
An \emph{$L$-parameter} for $G$ is an element of the continuous cohomology set $\rH^1(W_F',G^\vee)$.\footnote{More properly, such an element is an \emph{equivalence class} of $L$-parameters; since everything we will consider will be invariant under change of representative of the equivalence class, this viewpoint is most convenient for our purposes.}

Note that when we say \emph{continuous} cohomology, we mean the set of cocycles which are smooth when restricted to $W_F$ and Zariski-continuous when restricted to $\C$, modulo coboundaries.
\end{definition}

Dual to the above diagram, we obtain a diagram of canonical restriction maps:
\[\xymatrix{&\rH^1(W_F',G^\vee)\ar[dl]\ar[dr]&\\
\rH^1(W_F,G^\vee)\ar[dr]&&\rH^1(I_F',G^\vee)\ar[dl]\\
&\rH^1(I_F,G^\vee)&
}\]
Much of the content of this paper will be focused on imposing various equivalence relations on the set of irreducible representations of $G$ via this diagram, and understanding the relationships between these diagrams. In particular, the local Langlands correspondence is expected to give a finite-to-one parametrization $\rec:\Irr(G)\rightarrow\rH^1(W_F',G^\vee)$; the finite fibres of this map give a partition of $\Irr(G)$ into \emph{$L$-packets}. One can alternatively consider the fibres of $\varphi\circ\rec$, where $\varphi$ is any of the maps in the above diagram. Many of our questions will be aimed towards giving a precise description of the fibres of the resulting equivalence relation; our ultimate goal is the construction of a set of objects which naturally parametrize the fibres of $\Res_{I_F'}^{W_F'}\circ\rec$, which is essentially achieved by chasing around the diagram in the opposite direction.

The precise role of the action of $\C$ associated to an $L$-parameter will play a particularly key role throughout the paper; we refer to this action as the \emph{monodromy} of an $L$-parameter.

There are another family of useful maps on these cohomology sets which will be of use to us. Given a closed subgroup $\bH$ of $\bG$, there is often a dual embedding $\iota:H^\vee\hookrightarrow G^\vee$, which extends to an embedding $\iota:{}^LH\hookrightarrow{}^LG$. Via this embedding, one obtains a pushforward $\iota_*:\rH^1(\Gamma,H^\vee)\rightarrow\rH^1(\Gamma,G^\vee)$ whenever $\Gamma$ is one of the above groups. While one requires some hypotheses on $\bH$ in order to guarantee that the dual embedding works as desired, it is known that one can construct such a dual embedding when $\bH$ is either a torus or a Levi subgroup of $\bG$; this will be sufficient for our purposes. A description of the construction of these dual embeddings is given in \cite[5.1]{Kaletha2019}.

\section{Kaletha's correspondence}\label{sec:kaletha}

In \cite{Kaletha2019}, Kaletha directly constructs an extremely strong candidate for the image under the local Langlands correspondence of the set of \emph{regular supercuspidal representations} of $G$. While it is \emph{not} known that Kaletha's construction \emph{is} the local Langlands correspondence (due to the fact that it is currently unclear exactly which properties should characterize the correspondence for arbitrary groups), it is widely expected that this is the case. For the remainder of the paper, we assume that this is so.

\begin{definition}
Let $\pi$ be an irreducible representation of $G$ of inertial support $\frak{s}$, and let $(J(\Sigma),\lambda_\Sigma)$ be an $\frak{s}$-type arising from a datum $\Sigma=(\vec{G},M^0,u,\sigma,\vec{r},\vec{\phi})$ as in Section \ref{sec:kimyu}. We say that $\pi$ is \emph{regular} if the depth-zero component $\sigma$ is regular in the following sense: the restriction of $\sigma$ to $G_{u,0}^0$ is a sum of pairwise $G_{[u],0}^0$-conjugate representations; fix one such component $\sigma_0$. Then $\sigma_0$ is inflated from a cuspidal irreducible representation of $G_{u,0:0+}^0$, a finite group of Lie type; we say that $\sigma$ is regular if $\sigma_0$ is a regular Deligne--Lusztig representation, i.e. there exists a minisotropic maximal torus $\mathsf{T}\subset G_{u,0:0+}^0$, and a character $\vartheta$ of $\mathsf{T}$ in general position, such that $\sigma_0=\pm\mathrm{R}_{\mathsf{T}}\theta$.
\end{definition}

Kaletha shows that, when $\pi$ is a regular \emph{supercuspidal} representation of $G$, the information in the datum $\Sigma$ may be encoded in a \emph{regular elliptic pair} $(S,\Theta)$; this is a pair consisting of an elliptic torus $S$, together with a character $\Theta$ of $S$ which satisfies a number of properties. In doing so, Kaletha obtains a canonical bijection between the set of regular supercuspidal representations of $G$ and the set of $G$-conjugacy classes of regular elliptic pairs $(S,\Theta)$, which we denote by $\pi_{S,\Theta}\leftrightarrow(S,\Theta)$. For the most part, we are able to treat this construction as a black box, although we do require one basic property: by following Kaletha's construction of the bijection $(S,\Theta)\rightarrow\pi_{S,\Theta}$, which proceeds by first associating a datum $\Sigma_{S,\Theta}$ to $(S,\Theta)$, it is apparent that if $\chi$ is a character of $G$ such that $(S,\Theta\otimes\chi)$ is also a regular elliptic pair, then one has $\pi_{S,\Theta\otimes\chi}=\pi_{S,\Theta}\otimes\chi$. In particular, this is always the case when $\chi$ is an unramified character of $G$.

We write $\Cusp(G)$ for the set of isomorphism classes of regular supercuspidal representations, i.e. the set of isomorphism classes of representations of the form $\pi_{S,\Theta}$ for some regular elliptic pair $(S,\Theta)$. More generally, we will make use of the following definition throughout the remainder of the paper.

\begin{definition}
An irreducible representation $\pi$ of $G$ is \emph{regular} if its supercuspidal support $\sc(\pi)$ is a regular representation of some Levi subgroup of $G$.

We write $\Reg(G)$ for the set of isomorphism classes of regular representations of $G$.
\end{definition}

\begin{remark}
This is a slightly non-standard notion of regularity within the context of the Langlands programme. More typically, there is a notion of a regular $L$-parameter, and one says that $\pi$ is regular if the local Langlands correspondence assigns to it a regular $L$-parameter. It is widely expected that representations which are regular in our sense should also be regular in this sense, and our notion of regularity is much more natural from the $p$-adic group side of the correspondence, which is where much of our work will take place.
\end{remark}

The major result of \cite{Kaletha2019} is the construction of a local Langlands correspondence for the regular supercuspidal representations.

\begin{theorem}[\cite{Kaletha2019}]
There exists a natural map $\rec_G:\Cusp(G)\rightarrow\rH^1(W_F,G^\vee)$.
\end{theorem}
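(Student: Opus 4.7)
The plan is to unwind the bijection $\pi \leftrightarrow (S,\Theta)$ between regular supercuspidal representations and $G$-conjugacy classes of regular elliptic pairs, then assemble an $L$-parameter from the torus data using the Langlands--Shelstad recipe. Given $\pi \in \Cusp(G)$, fix the regular elliptic pair $(S,\Theta)$ with $\pi \simeq \pi_{S,\Theta}$; since $S$ is an elliptic maximal $F$-torus of $\bG$, local Langlands for tori is available, and (after auxiliary choices) there is an admissible embedding $^LS \hookrightarrow {}^LG$ dually extending the inclusion $S \hookrightarrow \bG$.

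Concretely, I would proceed in three steps. First, apply local Langlands for tori (via the Artin reciprocity map together with the identification $\Hom_{\cts}(S(F),\C^\times) \simeq \rH^1(W_F, S^\vee)$) to convert $\Theta$---or, more accurately, its twist $\Theta \cdot \epsilon$ by the rectifying character $\epsilon$ attached to the underlying Howe factorization of $\Theta$---into an $L$-parameter $\varphi_{\Theta \cdot \epsilon} : W_F \to {}^LS$. Second, choose $\chi$-data for the set of absolute roots of $S$ in $\bG$ in order to construct an admissible $L$-embedding $j: {}^LS \hookrightarrow {}^LG$ extending the given torus inclusion. Third, set $\rec_G(\pi) := [j \circ \varphi_{\Theta \cdot \epsilon}] \in \rH^1(W_F, G^\vee)$.

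The hard part, and the main obstacle, is checking that this cohomology class is independent of all the choices involved: the representative of the regular elliptic pair within its $G$-conjugacy class, the $\chi$-data used to construct $j$, and the Howe factorization implicit in the definition of $\epsilon$. Two different choices of $\chi$-data differ by a $1$-cocycle valued in $S^\vee$, which modifies $j$ by an inner automorphism of $^LG$; one must show that this change is precisely cancelled by the corresponding change in $\epsilon$. Replacing $(S,\Theta)$ by a $G$-conjugate similarly yields conjugate $\chi$-data and a cohomologous parameter. The regularity hypothesis is crucial throughout, since it is the non-degeneracy of the Deligne--Lusztig component $\sigma_0$ that pins down the rectifier and makes these cancellations work; this compatibility analysis constitutes the technical core of \cite{Kaletha2019}. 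Once it is established, naturality with respect to unramified twists is immediate: twisting $\Theta$ by an unramified character of $S$ corresponds to twisting $\varphi_{\Theta \cdot \epsilon}$ by an unramified character of $W_F$, which passes unchanged through $j$.
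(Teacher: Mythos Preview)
The paper does not supply its own proof of this statement: it is recorded as a theorem with the citation \cite{Kaletha2019} and used as a black box thereafter. Your proposal is a faithful outline of Kaletha's actual construction in that reference (regular elliptic pair $\leadsto$ toral parameter via local Langlands for tori, rectifying twist, transfer along an $L$-embedding built from $\chi$-data, then independence of choices), so in substance you are reproducing exactly what the paper defers to rather than offering an alternative argument.
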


As discussed above, the remainder of the paper assumes that this map $\rec_G$ \emph{is} the local Langlands correspondence.

\begin{remark}
Notice that the image of this map is contained in $\rH^1(W_F,G^\vee)$, \emph{not} in $\rH^1(W_F',G^\vee)$---in other words, every regular supercuspidal representation of $G$ corresponds to a parameter with a trivial monodromy action.
\end{remark}

As a consequence of this construction, one easily obtains an inertial Langlands correspondence. We write $\Type(G)$ for the set of equivalence classes of cuspidal Kim--Yu types which are contained in regular supercuspidal representations, i.e. the set of pairs of the form $(J(\Sigma),\lambda_\Sigma)$ where $\Sigma$ is some datum with regular Deligne--Lusztig depth-zero component. There is therefore a canonical surjective map $\Cusp(G)\rightarrow\Type(G)$, the image of which we denote by $\Type_\sc(G)$.

\begin{corollary}
There exists a natural map $\iner_G:\Type_\sc(G)\rightarrow\rH^1(I_F,G^\vee)$ such that the following diagram commutes.
\[\xymatrix{
\Cusp(G)\ar[r]^-{\rec_G}\ar[d] & \rH^1(W_F,G^\vee)\ar[d]^{\Res_{I_F}^{W_F}}\\
\Type_\sc(G)\ar[r]_-{\iner_G} & \rH^1(I_F,G^\vee)
}\]
\end{corollary}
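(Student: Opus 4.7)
The plan is to show that the composition $\Res^{W_F}_{I_F}\circ\rec_G\colon\Cusp(G)\to\rH^1(I_F,G^\vee)$ is constant on the fibres of the natural surjection $q\colon\Cusp(G)\to\Type_\sc(G)$; the map $\iner_G$ is then defined by the universal property of the quotient, and commutativity of the square is immediate from the construction.

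First I would identify the fibres of $q$ explicitly. By the chosen equivalence relation on types, two regular supercuspidals $\pi_1,\pi_2$ satisfy $q(\pi_1)=q(\pi_2)$ iff the Kim--Yu types contained in them are types for the same Bernstein block, i.e. iff $\mathfrak{I}(\pi_1)=\mathfrak{I}(\pi_2)$. Unwinding the definition of $G$-inertial equivalence of pairs $(G,\pi_i)$---where $G$-conjugation is inner and thus acts trivially on isomorphism classes of representations of $G$---this reduces to the assertion that $\pi_2\simeq\pi_1\otimes\omega$ for some unramified character $\omega$ of $G$. Hence the fibres of $q$ are exactly the orbits of the unramified-twisting action.

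Next I would verify that $\rec_G$ intertwines unramified twisting on representations with twisting by an unramified cocycle on parameters. Given an unramified character $\omega$ of $G$, local class field theory applied to the abelianization of $G$ produces a cocycle $\tilde\omega\in\rH^1(W_F/I_F,Z(G^\vee))$, whose image in $\rH^1(W_F,G^\vee)$ I also denote by $\tilde\omega$. The claim is that $\rec_G(\pi\otimes\omega)=\rec_G(\pi)\cdot\tilde\omega$. This is obtained by tracing through Kaletha's construction: as explicitly noted in the excerpt, the regular elliptic pair attached to $\pi\otimes\omega$ is $(S,\Theta\otimes\omega|_S)$, so the cocycle produced on the dual side is obtained from $\rec_G(\pi)$ by multiplying by the cocycle corresponding via local Langlands for the torus $S$ to $\omega|_S$, pushed forward along the admissible embedding ${}^LS\hookrightarrow{}^LG$. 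Since $\omega|_S$ is unramified and extends to an unramified character of $G$, this pushforward agrees with $\tilde\omega$.

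Finally, restricting to $I_F$ annihilates $\tilde\omega$ by construction, so $\Res^{W_F}_{I_F}\rec_G(\pi\otimes\omega)=\Res^{W_F}_{I_F}\rec_G(\pi)$. The constancy of $\Res^{W_F}_{I_F}\circ\rec_G$ on the fibres of $q$ follows, yielding $\iner_G$ and the commutative diagram. The only nontrivial step is the second one: verifying that Kaletha's bijection respects unramified twisting on the parameter side. This is essentially bookkeeping---the compatibility at the level of regular elliptic pairs is already recorded in the excerpt, and one needs only to check that the auxiliary data in Kaletha's construction ($\chi$-data and the admissible embedding of $^LS$) can be chosen uniformly for $\Theta$ and $\Theta\otimes\omega|_S$ so that the comparison of resulting cocycles is immediate.
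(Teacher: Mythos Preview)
Your proposal is correct and follows essentially the same approach as the paper: identify the fibres of $q$ as unramified-twist orbits and then argue that $\Res^{W_F}_{I_F}\circ\rec_G$ is constant on such orbits. The paper's proof simply asserts this last constancy in a single sentence, whereas you spell out the twist-compatibility of $\rec_G$ via Kaletha's construction; this additional detail is in fact close to what the paper later writes out in the proof of the strong inertial correspondence, so your elaboration is entirely in the spirit of the paper.
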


\begin{proof}
Given some $(J,\lambda)$ representing a class in $\Type(G)$, since $(J,\lambda)$ is an $\frak{s}$-type for some inertial equivalence class $\frak{s}=[G,\pi]_G$, any two irreducible representations of $G$ which contain $\lambda$ are related by a twist by some unramified character of $G$. In particular, the composition $\Res_{I_F}^{W_F}\rec(\pi)$ is independent of the choice of $\pi$. This gives the desired map $\iner_G:\Type_\sc(G)\rightarrow\rH^1(I_F,G^\vee)$.
\end{proof}

\section{Parabolic induction}\label{sec:non-cusp}\label{sec:parabolic}

Next, we explain how Kaletha's construction is \emph{expected} to extend to a local Langlands correspondence for $\Reg(G)$. We caution the reader that, in the large majority of cases, this has not been proved and is entirely conjectural.

\begin{hypothesis}\label{hyp}
Kaletha's map $\rec_G:\Cusp(G)\rightarrow\rH^1(W_F,G^\vee)$ extends to a map $\rec_G:\Reg(G)\rightarrow\rH^1(W_F',G^\vee)$ such that, for any Levi subgroup $M$ of $G$, and any dual embedding $\iota:{}^LM\hookrightarrow{}^LG$, if one denotes by $\Reg_M(G)$ the set of isomorphism classes of irreducible representations with supercuspidal support defined on $M$, then the following diagram is commutative.
\[\xymatrix{\Reg_M(G)\ar[d]_{\sc}\ar[r]^-{\rec_G}&\rH^1(W_F',G^\vee)\ar[d]^{\Res_{W_F}^{W_F'}}\\
\Cusp(M)\ar[r]_-{\rec_M}&\rH^1(W_F,M^\vee)
}\]
\end{hypothesis}
In particular, note that from this implies that if $\pi$ has supercuspidal support $(M,\rho)$, then $\Res_{W_F}^{W_F'}\ \rec_G(\pi)\in\iota_*\rH^1(W_F,M^\vee)$ .

\begin{proposition}[The weak inertial Langlands correspondence]
There exists a map $\overline{\iner}_G:\Type(G)\rightarrow\rH^1(I_F,G^\vee)$ which makes the following diagram commute.
\[\xymatrix{
\Reg(G)\ar[rrr]^-{\rec_G}\ar[dr]_{\sc}\ar[ddd] & & & \rH^1(W_F',G^\vee)\ar[ddd]^-{\Res_{I_F}^{W_F'}}\ar[dl]^{\Res_{W_F}^{W_F'}}\\
& \coprod_{M\subset G}\Cusp(M)\ar[r]^-{\iota_*\rec_M}\ar[d]&\rH^1(W_F,G^\vee)\ar[d]^{\Res_{I_F}^{W_F}}&\\
& \coprod_{M\subset G}\Type_\sc(M)\ar[r]_-{\iner_M}&\rH^1(I_F,G^\vee) &\\
\Type(G)\ar[ur]\ar[rrr]_-{\overline{\iner}_G}& & &\rH^1(I_F,G^\vee)\ar@{=}[ul]
}\]
Here, the map $\Type(G)\rightarrow\coprod\Type(M)$ is the canonical one obtained by recognizing that each $(J,\lambda)$ is a $G$-cover of $(J_M,\lambda_M)$ for some $M$; one can therefore map $(J,\lambda)$ to $(J\cap M,\lambda)$.
\end{proposition}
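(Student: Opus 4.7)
The plan is to define the map $\overline{\iner}_G$ directly using the cover structure, and then verify commutativity of the diagram by reducing to Hypothesis~\ref{hyp} and the cuspidal corollary of the previous section. Given a type $(J,\lambda)\in\Type(G)$, pick any datum $\Sigma=(\vec G,M^0,u,\sigma,\vec r,\vec\phi)$ producing $(J(\Sigma),\lambda_\Sigma)\sim(J,\lambda)$, let $M$ be the $G$-centralizer of the centre of $M^0$, and set $(J_M,\lambda_M)=(J\cap M,\lambda|_{J\cap M})\in\Type_\sc(M)$. I would then \emph{define}
\[
\overline{\iner}_G(J,\lambda)\;=\;\iota_*\bigl(\iner_M(J_M,\lambda_M)\bigr)\;\in\;\rH^1(I_F,G^\vee),
\]
where $\iota\colon{}^LM\hookrightarrow{}^LG$ is a chosen dual embedding of the Levi.

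The first step is to verify that this recipe does not depend on the auxiliary choices. Any two cuspidal types contained in the same $M$-inertial class $\frak{s}_M=[M,\rho]_M$ differ by twisting $\rho$ by an unramified character $\omega$ of $M$; by Kaletha's compatibility of $(S,\Theta\otimes\chi)\leftrightarrow\pi_{S,\Theta}\otimes\chi$ the parameters $\rec_M(\rho)$ and $\rec_M(\rho\otimes\omega)$ differ by an unramified cocycle, and restriction to $I_F$ kills this difference, so the cuspidal corollary gives the same element of $\rH^1(I_F,M^\vee)$. Next, changing the cover data changes $M$ only within its $G$-conjugacy class; since the dual embeddings associated to $G$-conjugate Levis differ by $G^\vee$-conjugation, the push-forward class in $\rH^1(I_F,G^\vee)$ is unchanged. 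The Levi $M$ is itself determined up to $G$-conjugacy by the inertial class $\frak{s}=[M,\rho]_G$ attached to $(J,\lambda)$, so these two remarks together show $\overline{\iner}_G$ is well-defined on $\Type(G)$.

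To check commutativity of the outer square in the displayed diagram, let $\pi\in\Reg(G)$ with supercuspidal support $(M,\rho)$ and suppose $\pi$ contains the cover $(J,\lambda)$ of $(J_M,\lambda_M)\in\Type_\sc(M)$. Hypothesis~\ref{hyp} gives
\[
\Res_{W_F}^{W_F'}\rec_G(\pi)\;=\;\iota_*\,\rec_M(\rho),
\]
and applying $\Res_{I_F}^{W_F}$ to both sides, together with the naturality of restriction under the push-forward $\iota_*$, yields
\[
\Res_{I_F}^{W_F'}\rec_G(\pi)\;=\;\iota_*\,\Res_{I_F}^{W_F}\rec_M(\rho)\;=\;\iota_*\,\iner_M(J_M,\lambda_M)\;=\;\overline{\iner}_G(J,\lambda),
\]
where the middle equality is the commutative square of the cuspidal corollary applied to $M$. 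All inner sub-diagrams commute trivially or by definition, so the outer diagram commutes as claimed.

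The main obstacle is psychological rather than technical: one must be confident that the three ``naturality'' inputs---Kaletha's compatibility with unramified twists, the $G^\vee$-conjugacy of dual embeddings of $G$-conjugate Levis, and the commutativity of restriction with $\iota_*$---can indeed be combined to force well-definedness. All three are standard, so once the definition is set up carefully the proof is essentially a diagram chase; the genuinely delicate statement concerning \emph{typical} (as opposed to merely cover-type) representations, and the accompanying loss of monodromy, is deferred to the main theorem of Section~\ref{sec:inertial}.
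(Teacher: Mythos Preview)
Your argument is correct and is exactly the approach the paper takes: the paper's own proof is the single sentence ``This follows easily from the supercuspidal case, given the properties listed in Hypothesis~\ref{hyp},'' and you have simply unpacked that sentence into the obvious diagram chase through $\iner_M$ and $\iota_*$. One minor wording quibble: the sentence ``any two cuspidal types contained in the same $M$-inertial class $\frak{s}_M$ differ by twisting $\rho$ by an unramified character'' conflates types with supercuspidal representatives---by the paper's equivalence relation two such types are literally equal in $\Type_\sc(M)$, so the real content of that paragraph is only the $G$-conjugacy-of-Levis point, which you handle correctly.
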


\begin{proof}
This follows easily from the supercuspidal case, given the properties listed in Hypothesis \ref{hyp}.
\end{proof}

\begin{remark}
We refer to $\overline{\iner}_G$ as the \emph{weak} inertial Langlands correspondence, since it is not capable of distinguishing between representations with equivalent supercuspidal supports; on the Galois side of the correspondence, this is the same as noting that its image consists of classes with trivial monodromy. Our goal for the remainder of the paper will be to remedy this, realizing $\overline{\iner}_G$ as a quotient of a ``strong'' inertial Langlands correspondence $X\rightarrow\rH^1(I_F',G^\vee)$, where $X$ is a set of objects which is closely related to $\Type(G)$.
\end{remark}

\section{Typical representations}\label{sec:typical}

In order to construct our strong form of the inertial Langlands correspondence, we will need to work with a slightly more general class of objects than $\frak{s}$-types.

\begin{definition}
Let $(J,\lambda)$ be a pair consisting of an irreducible representation $\lambda$ of a compact open subgroup $J$ of $G$. We say that $(J,\lambda)$ is $\frak{s}$-typical if, whenever $\pi\in\Irr(G)$ contains $\lambda$, one has $\pi\in\Rep^{\frak{s}}(G)$.
\end{definition}

\begin{remark}
In other words, this is close to asking that $\Rep_\lambda(G)$ is a subcategory of $\Rep^{\frak{s}}(G)$ (as opposed to being equal to $\Rep^{\frak{s}}(G)$, as would be the case if $(J,\lambda)$ were an $\frak{s}$-type). However, showing that this is equivalent to the above definition encounters some difficulties when one considers inadmissible representations of $G$, since if $(J,\lambda)$ is $\frak{s}$-typical but not an $\frak{s}$-type, then $\Rep_\lambda(G)$ is not a Serre subcategory of $\Rep(G)$---in this case $\Rep_\lambda(G)$ is not closed under formation of subquotients.
\end{remark}

As long as one has a construction of types, it is easy to see that many typical representations exist.

\begin{lemma}
Suppose that $(J,\lambda)$ is an $\frak{s}$-type and that $K\supset J$ is a maximal compact subgroup of $G$.
\begin{enumerate}[(i)]
\item For any irreducible component $\tau$ of $\Ind_J^K\ \lambda$, the pair $(K,\tau)$ is $\frak{s}$-typical.
\item If $\pi\in\Irr(G)$ contains $\lambda$, then there exists a pair $(K,\tau)$ as above such that $\pi|K$ contains $\tau$.
\end{enumerate}
\end{lemma}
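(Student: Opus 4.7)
My plan is to handle the two claims separately, with Frobenius reciprocity and the semisimplicity of smooth representations of compact groups doing essentially all of the work.

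For part (i), I would start from the definition: I must show that any $\pi\in\Irr(G)$ containing $\tau$ lies in $\Rep^{\frak{s}}(G)$. Since $\tau$ is an irreducible component of $\Ind_J^K\ \lambda$, Frobenius reciprocity gives $\Hom_J(\tau|_J,\lambda)\neq 0$, so $\tau|_J$ contains $\lambda$. If $\pi|_K$ contains $\tau$ then $\pi|_J$ contains $\lambda$, and the assumption that $(J,\lambda)$ is an $\frak{s}$-type then forces $\pi\in\Rep^{\frak{s}}(G)$.

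For part (ii), I would first observe that $J$ is open in the compact group $K$, so $\Ind_J^K\ \lambda$ is finite-dimensional. The hypothesis that $\pi$ contains $\lambda$ gives $\Hom_J(\lambda,\pi|_J)\neq 0$, and Frobenius reciprocity yields a non-zero $K$-equivariant map $\Ind_J^K\ \lambda\to\pi|_K$. Since $K$ is compact and $\pi$ is smooth, the $K$-representation $\pi|_K$ is semisimple; likewise the finite-dimensional $K$-representation $\Ind_J^K\ \lambda$ is semisimple. Any irreducible constituent $\tau$ of the image is therefore simultaneously an irreducible component of $\Ind_J^K\ \lambda$ (as a quotient, hence a subrepresentation by semisimplicity) and a subrepresentation of $\pi|_K$, which is exactly what is required.

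I do not anticipate any genuine obstacle here: both parts are formal consequences of Frobenius reciprocity combined with the complete reducibility of smooth representations of compact groups. The only point requiring minor care is that $\pi|_K$ is in general infinite-dimensional, so one has to invoke semisimplicity of smooth representations of $K$ rather than finite-dimensional semisimplicity, but this is standard. Everything else is an exercise in chasing the definition of an $\frak{s}$-type.
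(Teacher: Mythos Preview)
Your proposal is correct and follows exactly the approach the paper indicates: the paper's proof consists of the single sentence ``Each of these is a straightforward consequence of Frobenius reciprocity,'' and you have simply written out the details of that consequence. There is nothing to add.
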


\begin{proof}
Each of these is a straightforward consequence of Frobenius reciprocity.
\end{proof}

In particular, one obtains a collection of typical representations which are of the form $(K,\tau)$ where $K\subset G$ is a maximal compact subgroup and $\tau|_{J(\Sigma)}$ contains $\lambda_\Sigma$ for some datum $\Sigma$ such that $J(\Sigma)\subset K$. We denote the set of $G$-conjugacy classes of such typical representations by $\Typ(G)$. Notice that there is a canonical map $\Typ(G)\rightarrow\Type(G)$, given by mapping a typical representation to its underlying equivalence class of types.

While there is not a canonical map $\Irr(G)\rightarrow\Typ(G)$, there \emph{is} a canonical map from $\Irr(G)$ to the power set $\mathcal{P}\Typ(G)$, namely the map which sends $\pi$ to the set $X_\pi$ of $G$-conjugacy classes of $\frak{s}$-typical representations $(K,\tau)$ such that $\Hom_K(\tau,\pi)\neq 0$, as $K$ ranges over a set of representatives of the conjugacy classes of maximal compact subgroups of $G$. Moreover, the canonical map $\Typ(G)\rightarrow\Type(G)$ clearly extends to a partial map $\mathcal{P}\Typ(G)\rightarrow\Type(G)$, defined on the subset of $\mathcal{P}\Typ(G)$ consisting of sets of the form $X_\pi$.

\section{The inertial Langlands correspondence}\label{sec:inertial}

With the above discussion in mind, we write $\varphi$ for the canonical map $\Reg(G)\rightarrow\mathcal{P}\Typ(G)$, defined by $\varphi(\pi)=X_\pi$. We write $\Typ_\varphi(G)$ for the image of $\varphi$.

The key to constructing a ``strong'' form of the inertial Langlands correspondence is the following result.

\begin{theorem}\label{thm1}\label{thm:main}
Suppose that $\pi_1,\pi_2\in\Reg(G)$ are such that $\sc(\pi_1)=\sc(\pi_2)$ and $\varphi(\pi_1)=\varphi(\pi_2)$. Then $\pi_1\simeq\pi_2$.
\end{theorem}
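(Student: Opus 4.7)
The plan is to reduce the statement to a question about modules over the Hecke algebra $\cH(G,\lambda)$ attached to a suitable type. Let $\frak{s}=[M,\rho]_G$ denote the common inertial support of $\pi_1$ and $\pi_2$, and fix a Kim--Yu type $(J,\lambda)=(J(\Sigma),\lambda_\Sigma)$ which is both an $\frak{s}$-type and a $G$-cover of some $\frak{s}_M$-type $(J_M,\lambda_M)$. Bushnell--Kutzko's theory then furnishes an equivalence of categories $\Rep^{\frak{s}}(G)\simeq\cH(G,\lambda)\text{-mod}$ via the functor $\pi\mapsto\pi^\lambda:=\Hom_J(\lambda,\pi)$, so it suffices to prove that $\pi_1^\lambda\simeq\pi_2^\lambda$ as $\cH(G,\lambda)$-modules.

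Since $\sc(\pi_1)=\sc(\pi_2)=(M,\rho)$, both $\pi_i$ appear as irreducible subquotients of the same parabolic induction $\Ind_P^G\ \rho$; the cover property of $(J,\lambda)$ identifies $(\Ind_P^G\ \rho)^\lambda$ with a parabolically-induced $\cH(G,\lambda)$-module of finite length, inside which $\pi_1^\lambda$ and $\pi_2^\lambda$ both occur. The problem thus becomes the concrete one of showing that the hypothesis $\varphi(\pi_1)=\varphi(\pi_2)$ is enough to separate distinct irreducible subquotients of this single, explicit, finite-length induced module.

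Next, one translates the hypothesis into Hecke-algebraic terms. For each maximal compact $K\supset J$, the irreducible components of $\Ind_J^K\ \lambda$ are precisely the $\frak{s}$-typical representations $(K,\tau)$ whose restriction to $J$ contains $\lambda$. A Frobenius reciprocity argument, combined with the category equivalence, shows that $(K,\tau)\in\varphi(\pi)$ if and only if the finite-dimensional $\cH(K,\lambda)$-module $\tau^\lambda:=\Hom_J(\lambda,\tau)$ embeds into $\pi^\lambda$ as an $\cH(K,\lambda)$-submodule. Thus the assumption $\varphi(\pi_1)=\varphi(\pi_2)$ becomes the assertion that $\pi_1^\lambda$ and $\pi_2^\lambda$ contain the same set of simple submodules upon restriction to each finite subalgebra $\cH(K,\lambda)\subset\cH(G,\lambda)$.

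The main obstacle is the final step: showing that this finite-dimensional "fingerprint", taken across all maximal compacts $K\supset J$, is enough to pin down an irreducible subquotient of $(\Ind_P^G\ \rho)^\lambda$. My strategy here is to exploit the combinatorial description of typical representations developed in Section \ref{sec:construction}, in which the relevant $K$'s are parametrized by vertices of a distinguished subcomplex of $\buil(G)$ attached to the datum $\Sigma$, and the Kim--Yu construction — under our regularity assumption — transparently controls how $(\Ind_P^G\ \rho)^\lambda$ restricts to each $\cH(K,\lambda)$. The crux of the argument is to exhibit enough $K$'s whose finite subalgebras jointly generate sufficient structure to reconstruct the $\cH(G,\lambda)$-module; this is where regularity is essential, and is what separates the result from a formal consequence of the Bushnell--Kutzko equivalence.
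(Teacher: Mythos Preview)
Your proposal sets up a reasonable Hecke-algebraic framework, but it is not a proof: the final paragraph is entirely aspirational. You correctly translate the hypothesis $\varphi(\pi_1)=\varphi(\pi_2)$ into the statement that $\pi_1^\lambda$ and $\pi_2^\lambda$ contain the same simple $\cH(K,\lambda)$-submodules for each maximal compact $K\supset J$. But you never explain why this collection of finite-dimensional ``fingerprints'' determines an irreducible subquotient of $(\Ind_P^G\rho)^\lambda$. Saying that the $\cH(K,\lambda)$ ``jointly generate sufficient structure'' is a restatement of the goal, not an argument; and your invocation of Section~\ref{sec:construction} does not help, since that section only identifies the irreducible components of $\Ind_J^K\lambda$---it says nothing about how two distinct subquotients of $(\Ind_P^G\rho)^\lambda$ might be separated by their $\cH(K,\lambda)$-restrictions. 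Your claim that ``this is where regularity is essential'' is also misplaced: the paper's argument makes no use of regularity whatsoever, so if your approach genuinely requires it you are introducing an unnecessary hypothesis, and if it does not then you have not located the actual difficulty.

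The paper takes a completely different, geometric route. Rather than passing to Hecke modules, it works directly inside $\Ind_P^G\rho$. Letting $y_1,\dots,y_n$ be the vertices of the facet $\mathscr{F}$ attached to the type, the hypothesis $\varphi(\pi_1)=\varphi(\pi_2)$ is used to produce $G_{y_i}$-equivariant maps from the $\lambda_\Sigma$-isotypic pieces of $\pi_1|_{G_{y_i}}$ into $\pi_2$, and hence to trap both $\pi_1$ and $\pi_2$ inside a common subrepresentation $W$ of $\Ind_P^G\rho$. The substantive work is then to show that $W$ is irreducible: using the Iwahori decomposition of the cover one reduces to showing $\langle G_{y_i}P\ |\ i\rangle=G$, and this is a Bruhat--Tits argument---the vertices $y_i$ see every wall of a chamber, so the group they generate contains the affine Weyl group and hence a special parahoric, after which the Iwasawa decomposition finishes. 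None of this is visible from the Hecke-algebra side, and it is precisely this concrete building-theoretic step that your proposal is missing.
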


\begin{proof}
Let $(M,\rho)$ denote a representative of $\sc(\pi_1)=\sc(\pi_2)$, and choose a parabolic subgroup $P$ of $G$ with Levi factor $M$, so that both $\pi_1$ and $\pi_2$ occur as irreducible subquotients of the representation $\Ind_P^G\ \rho$. It is clear that $\pi_1\simeq\pi_2$ when $M=G$, so we may assume that $M$ is a proper Levi subgroup of $G$.

Fix a datum $\Sigma=(\vec{G},M^0,u,\sigma,\vec{r},\vec{\phi})$ such that $(J(\Sigma),\lambda_\Sigma)$ is an $[M,\rho]_M$-type; then the fact that $M\neq G$ implies that $M^0\neq G^0$, and so $u$ is contained in the interior of a facet of positive dimension in $\buil(G^0)$, and hence also in the interior of some positive-dimensional facet $\mathscr{F}\subset\buil(G)$. By \cite[6.3]{MoyPrasad1996}, we may associate to $\mathscr{F}$ a Levi subgroup $M_\mathscr{F}$ of $G$ lying in the $G$-orbit of $M$; without loss of generality we may take $M=M_\mathscr{F}$. Fix an apartment $\apart\subset\buil(G)$ containing $\mathscr{F}$ , corresponding to a maximal split torus $S\subset G$.

Let $y_1,\dots,y_n$ denote the vertices of $\mathscr{F}$, so that one has $J(\Sigma)\subset G_{y_i}$ for each $i$. Since $M$ contains $U_\psi$ whenever $\psi\in\Psi(G,S)$ is an affine root which takes constant value on $\mathscr{F}$, one easily sees that $G_{y_i}P=G_{y_j}P$ for all $i,j$, from which it follows that $G_{y_i}P=G_{\mathscr{F}}P$ for all $i$.

For each index $i$, let $V_i$ denote the maximal multiplicity-free $\lambda_\Sigma$-isotypic subrepresentation of $\pi_1|_{G_{y_i}}$. Then by assumption we have injective $G_{y_i}$-equivariant maps $V_i\rightarrow\pi_2$. We may therefore define subrepresentations $W_i$ of $\Ind_P^G\ \rho$ generated by $V_i$ for each $i$ and observe that both $\pi_1$ and $\pi_2$ are contained in $W:=\bigcap_i W_i$. We claim that this representation $W$ must be irreducible.

Since the type $(J(\Sigma),\lambda_\Sigma)$ is a $G$-cover of the $[M,\rho]_M$-type $(J(\Sigma)\cap M,\lambda_\Sigma)$, there is an Iwahori decomposition $J(\Sigma)=(J\cap\bar{N})(J\cap P)$, where $\bar{N}$ denotes the unipotent radical of the parabolic subgroup $\bar{P}$ such that $\bar{P}\cap P=M$. Furthermore, the representation $\Ind_P^G\ \rho$, being contained in the Bernstein block $\Rep^{[M_\rho]_G}(G)$, is generated by its $\lambda_\Sigma$-isotypic vectors. We may therefore extend $\rho$ to a representation of the group generated by $J(\Sigma)P$ by declaring that $J(\Sigma)\cap \bar{N}$ acts trivially. As $J(\Sigma)\subset G_{y_i}$ for each $i$ and each representation $V_i$ is $\lambda_\Sigma$-isotypic, we find that $W$ is contained in the representation $\Ind_{\langle J(\Sigma)P\rangle}^G\ \rho$, which canonically occurs as a subrepresentation of $\Ind_P^G\ \rho$. 

Again using the fact that $J(\Sigma)$ is a $G$-cover of $(J(\Sigma)\cap M,\lambda_\Sigma)$, as $M$ is the Levi subgroup associated to $\mathscr{F}$ it follows from the fact that $\Ind_{J(\Sigma)\cap M}^{M_{\mathscr{F}}}\ \lambda_\Sigma$ is irreducible that $\Ind_{J(\Sigma)}^{G_\mathscr{F}}\ \lambda_\Sigma$ is irreducible, and hence so is $\tilde{\rho}:=\Ind_{\langle J(\Sigma)P\rangle}^{\langle G_\mathscr{F}P\rangle}\ \rho$. In particular, we find that $W$ is contained in $\Ind_{\langle G_\mathscr{F}P\rangle}^G\ \tilde{\rho}$, and that this representation is identical to $\Ind_{\langle G_{y_i}P\rangle}^G\ \tilde{\rho}$. In particular, applying Frobenius reciprocity we conclude that the representations $\pi_1$ and $\pi_2$ of $G$ must intertwine when viewed as representations of $\langle G_{y_i}P\rangle$. It remains for us to show that $\langle G_{y_i}P\rangle=G$.

Fix a maximal split torus $S\subset G$ which contains the centre of $M$, and which is such that the corresponding apartment $\apart\subset\buil(G)$ contains the facet $\mathscr{F}$, and hence each of the points $y_i$. Let $W_S^\aff=N_G(S)/S_{\mathrm{b}}$ denote the affine Weyl group of $S$; here, $S_{\mathrm{b}}\subset S$ denotes the maximal bounded subgroup. For each point $v\in\apart$, we may define a subgroup $W_v$ of $W_S^\aff$ generated by the reflections $s_\psi$ for those affine roots $\psi\in\Psi(S)$ such that $v$ lies on the hyperplane $H_\psi=\{v'\in\apart\ |\ \psi(v')=0\}$. Each group $W_v$ is a finite group which, by pinning the root system $\Phi(G,S)$ at a special vertex of $\mathscr{C}$, may be identified with a finite subgroup of the Weyl group $W_S=N_G(S)/S$. Clearly $W_v$ fixes the simplicial closure of $v$ pointwise, and so $W_v\subset G_v$.

Let $H=\langle G_z\ |\ z\in\mathscr{F}\rangle=\langle G_{y_i}\ |\ i\rangle$. Fix a chamber $\mathscr{C}$ in $\apart$ containing $\mathscr{F}$. The chamber $\mathscr{C}$ is a polysimplex, and therefore admits a decomposition $\mathscr{C}=\mathscr{C}_1\times\cdots\times\mathscr{C}_k$ as a product of simplices. Since $M$ is a proper Levi subgroup of $G$, the facet $\mathscr{F}$ contains at least two vertices of $\mathscr{C}$, and so in the case that $k=1$ we find that every wall of $\mathscr{C}$ must contain a vertex of $\mathscr{F}$. Then $H$ contains $s_\psi$ whenever $H_\psi$ is a wall of $\mathscr{C}$. From this, a straightforward inductive argument shows that the same conclusion holds true regardless of the value of $k$. 

So $H$ contains $G_{y_i}wG_{y_i}$ whenever $y_i$ is a vertex of $\mathscr{C}$ and $w\in W_S^\aff$, hence it also contains $\bigcup_{w\in W_S^\aff}G_\mathscr{C}wG_\mathscr{C}$; by the Bruhat--Tits decomposition this union contains every maximal compact subgroup of $G$. In particular, it contains $G_z$ for some special vertex $z\in\mathscr{C}$. From this, we see that $\langle G_{y_i}P\rangle=\langle G_zP\rangle=G_zP=G$, by the Iwasawa decomposition. It follows that both $\pi_1$ and $\pi_2$ both admit injective $G$-equivariant maps into the irreducible representation $\Ind_{G_\mathscr{F}P}^G\ \rho$, and are therefore isomorphic, as desired.
\end{proof}

With this, we come to our main result.

\begin{theorem}[The inertial Langlands correspondence]\label{thm:inertial}
There exists a map $\iner_G:\Typ_\varphi(G)\rightarrow\rH^1(I_F',G^\vee)$ such that the following diagram commutes.
\[\xymatrix{
\Reg(G) \ar[r]^-{\rec_G}\ar[d]_\varphi & \rH^1(W_F',G^\vee)\ar[d]^{\Res_{I_F'}^{W_F'}}\\
\Typ_\varphi(G)\ar[r]_-{\iner_G} & \rH^1(I_F',G^\vee)
}\]
Moreover, the map $\iner_G$ is compatible with $\overline{\iner}_G$, in the sense that one has a commutative diagram
\[\xymatrix{
\Typ_\varphi(G)\ar[r]^-{\iner_G}\ar[d]&\rH^1(I_F',G^\vee)\ar[d]^{\Res_{I_F}^{I_F'}}\\
\Type(G)\ar[r]_-{\overline{\iner}_G} & \rH^1(I_F,G^\vee)
}\]
\end{theorem}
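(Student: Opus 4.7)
The strategy is to \emph{define} $\iner_G(X) := \Res_{I_F'}^{W_F'}\rec_G(\pi)$ for any $\pi \in \varphi^{-1}(X)$, so that commutativity of the first diagram is automatic from the definition, while commutativity of the second follows by composing with $\Res_{I_F}^{I_F'}$ and invoking the weak inertial correspondence of Section \ref{sec:parabolic} (together with Hypothesis \ref{hyp}) applied to the underlying type of $X$; the latter is well-defined because the partial map $\mathcal{P}\Typ(G) \to \Type(G)$ is defined at every element of $\Typ_\varphi(G)$. The entire substance of the theorem therefore lies in showing that this definition is independent of the choice of $\pi$.

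For well-definedness, suppose $\varphi(\pi_1) = \varphi(\pi_2) = X$. Both $\pi_i$ then lie in a common Bernstein block $\Rep^\frak{s}(G)$ with $\frak{s} = [M,\rho]_G$, so after $G$-conjugation we may assume $\sc(\pi_i) = (M,\rho_i)$ with $\rho_2 = \rho_1 \otimes \omega$ for some unramified character $\omega$ of $M$. Hypothesis \ref{hyp} immediately gives equality of the $I_F$-components of the $\rec_G(\pi_i)$ (unramified twists are trivial on $I_F$), so it remains to compare monodromies. My plan here is an unramified-twist reduction to Theorem \ref{thm:main}: since any $\chi \in X_{\un}(G)$ is trivial on every compact open subgroup of $G$, one has $\varphi(\pi \otimes \chi) = \varphi(\pi)$, while on the dual side the associated twist of $\rec_G(\pi)$ is an unramified cocycle, trivial on both $I_F$ and on the monodromy generator, so $\Res_{I_F'}^{W_F'}\rec_G$ is $X_{\un}(G)$-invariant as well. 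It therefore suffices to find $\chi \in X_{\un}(G)$ with $\chi|_M = \omega^{-1}$: then $\pi_2 \otimes \chi$ and $\pi_1$ share both supercuspidal support and $\varphi$-image, so Theorem \ref{thm:main} forces $\pi_1 \simeq \pi_2 \otimes \chi$, and the twist-invariance above yields the required equality of $\Res_{I_F'}^{W_F'}\rec_G(\pi_i)$.

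The main obstacle will be securing the existence of such a $\chi$, since the restriction map $X_{\un}(G) \to X_{\un}(M)$ is typically not surjective. I expect the resolution to be that no genuinely obstructive $\omega$ can arise in the first place: if $\omega$ lay outside the image of restriction from $G$, then an application of Theorem \ref{thm:main} across the $X_{\un}(M)$-orbit of $\rho_1$ should produce pairs of representations in $\varphi^{-1}(X)$ whose $\varphi$-images are forced to differ, contradicting the hypothesis $\varphi(\pi_1) = \varphi(\pi_2)$. Making this rigidity precise -- exploiting the structure of $(J(\Sigma),\lambda_\Sigma)$ as a $G$-cover to track how $\lambda$-isotypic components of parabolically induced representations behave under unramified twists of the inducing datum -- is where I anticipate the bulk of the remaining technical work.
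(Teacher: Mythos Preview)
Your overall strategy---define $\iner_G(X)$ via any $\pi\in\varphi^{-1}(X)$ and reduce the well-definedness to Theorem~\ref{thm:main} after handling an unramified twist of the supercuspidal support---matches the paper's exactly, and your treatment of the two commutative diagrams is fine. The divergence, and the genuine gap, is entirely in how you propose to neutralise the unramified character $\omega$ of $M$.

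Your plan requires extending $\omega$ to some $\chi\in X_{\un}(G)$ so that you can twist on the level of $G$ and invoke the (correct) observations that $\varphi$ and $\Res_{I_F'}^{W_F'}\circ\rec_G$ are $X_{\un}(G)$-invariant. As you note, $X_{\un}(G)\to X_{\un}(M)$ is rarely surjective, and your suggested rescue---that an ``obstructive'' $\omega$ would force $\varphi(\pi_1)\neq\varphi(\pi_2)$ via Theorem~\ref{thm:main}---does not go through: Theorem~\ref{thm:main} only applies when the supercuspidal supports already agree, so it gives you no leverage for comparing representations with supports $(M,\rho)$ and $(M,\rho\otimes\omega)$ when $\omega$ fails to extend. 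There is no mechanism in your outline for producing the required contradiction, and tracking $\lambda$-isotypic components through $G$-covers will not supply one, since the restriction of $\Ind_P^G(\rho\otimes\omega)$ to any compact open subgroup is isomorphic to that of $\Ind_P^G\rho$ regardless of whether $\omega$ extends.

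The paper sidesteps this entirely by moving the twist to the dual side. Rather than extending $\omega$ to $G$, it uses Kaletha's description of $\rho$ and $\rho\otimes\omega$ via regular elliptic pairs on a common torus $S\subset M$: the local Langlands correspondence for $S$ sends $\omega|_S$ to a cocycle valued in $S^\vee$, which pushes forward along $S^\vee\hookrightarrow M^\vee\hookrightarrow G^\vee$ to a cocycle $c_\omega:W_F\to G^\vee$, trivial on $I_F$ since $\omega$ is unramified. A Jordan-decomposition argument then shows $c_\omega$ extends (trivially on $\C$) to $W_F'$ and that $\rec_G(\pi')=\rec_G(\pi)\cdot c_\omega$; since $c_\omega$ is trivial on $I_F'=I_F\ltimes\C$, this reduces to the case $\sc(\pi)=\sc(\pi')$ where Theorem~\ref{thm:main} applies. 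The point is that on the Galois side there is no extension obstruction: $S^\vee$ sits inside $G^\vee$ whether or not the corresponding character of $M$ extends to $G$. You should abandon the attempt to lift $\omega$ to $G$ and instead carry out the twist in $\rH^1(W_F',G^\vee)$ using the torus $S$ furnished by the regular elliptic pair.
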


\begin{proof}
Given $X\in\Typ_\varphi(G)$, choose a representation $\pi\in\Reg(G)$ such that $\varphi(\pi)=X$. Then we set $\iner_G(X)=\Res_{I_F'}^{W_F'}\rec_G(\pi)$.

Suppose that $\pi'\in\Reg(G)$ also satisfies $\varphi(\pi')=X$. We claim that $\Res_{I_F'}^{W_F'}\rec_G(\pi')=\Res_{I_F'}^{W_F'}\rec_G(\pi)$. Since $\varphi(\pi)=\varphi(\pi')$, projecting along the canonical map $\Typ_\varphi(G)\rightarrow\Type(G)$ shows that $\pi$ and $\pi'$ are inertially equivalent. In other words, if $(M,\rho)$ is a representative of $\sc(\pi)$, then there exists an unramified character $\omega$ of $M$ such that $(M,\rho\otimes\omega)$ is a representative of $\sc(\pi')$. Since $\pi,\pi'\in\Reg(G)$, by definition we may find regular elliptic pairs $(S,\Theta)$ and $(S',\Theta')$ in $M$ such that $\rho=\pi_{S,\Theta}$ and $\rho'=\pi_{S',\Theta'}$; since $\rho'=\rho\otimes\omega=\pi_{S,\Theta}\otimes\omega=\pi_{S,\Theta\omega}$, it follows that we may without loss of generality take $S=S'$ and $\Theta'=\Theta\omega$.

Now consider the $L$-parameter $\rec_G(\pi')$. By construction, the semisimplification of this is precisely the pushforward $\iota_*\rec_M(\rho')=\iota_*\rec_M(\rho\otimes\omega)$ along an appropriate embedding $\iota:{}^LM\hookrightarrow{}^LG$. Since both $\rho$ and $\rho'=\rho\otimes\omega$ are constructed from regular elliptic pairs defined on the same torus $S\subset M$, one finds that both $\iota_*\rec_M(\rho)$ and $\iota_*\rec_M(\rho')$ have image in $^LS\hookrightarrow{}^LM\hookrightarrow{}^LG$.

Given an element $a$ of the image of $\rec_G(\pi')$, one may take the Jordan decomposition $a=su$ with $s$ semisimple and $u$ unipotent, so that $s$ is contained in the image of $\iota_*\rec_M(\rho')$, which is an abelian group. If $a=\rec_G(\pi')(w)$, then $a$ must commute with the semisimple element $b=\iota_*\rec_T(\omega)(w)$, and one finds that $(sb)u=(bs)u=u(bs)=u(sb)$ is a Jordan decomposition of $ab$. In particular, this implies that $\red_T(\omega)$ extends to a 1-cocycle $c_\omega:W_F'\rightarrow G^\vee$ which is trivial on both $I_F$ (since $\omega$ is unramified) and $\C$. Given a 1-cocycle $f$ representing the class $\rec_G(\pi)$, the pointwise product $fc_\omega$ is a 1-cocycle whose cohomology class is independent of the choice of $f$; in this way we obtain a class $\rec_G(\pi)\cdot c_\omega\in\rH^1(W_F',G^\vee)$; by the above observations on the Jordan decomposition, we see that one must have $\rec_G(\pi)\cdot c_\omega=\rec_G(\pi')$.

We may therefore assume without loss of generality that $\sc(\pi)=\sc(\pi')$, in which case the assumption that $\varphi(\pi)=\varphi(\pi')$ allows us to apply Theorem \ref{thm1} to deduce that $\pi\simeq\pi'$. As argued above, changing the supercuspidal support of $\pi'$ by an unramified twist does not affect $\Res_{I_F'}^{W_F'}\rec_G(\pi')$, and so we conclude that $\Res_{I_F'}^{W_F'}\rec_G(\pi)$ is independent of the choice of $\pi$. In particular, this gives us our definition of $\iner_G:\Typ_\varphi(G)\rightarrow\rH^1(I_F',G^\vee)$, from which the commutativity of the first diagram is immediate.

The commutativity of the second diagram follows easily from Hypothesis \ref{hyp}, along with the fact that the canonical map $\Typ_\varphi(G)\rightarrow\Type(G)$ simply maps each typical representation to its underlying type.
\end{proof}

\begin{remarks}
One should note that, due to the hypotheses on $p$ which are in force throughout the paper, we only claim to establish the existence of this inertial correspondence for the so-called \emph{essentially tame representations} (which should be those corresponding to $L$-parameters for which the image of the wild inertia subgroup is abelian). It is worth pointing out the precise extent to which these hypotheses are required for the above proof to work. Most significantly, they guarantee the existence of $\frak{s}$-types for each $\frak{s}$; however, the existence of $\frak{s}$-types is known even for wildly ramified representations in a number of cases, such as for general or special linear groups \cite{BushnellKutzko1993,BushnellKutzko1994}, inner forms of general linear groups \cite{SecherreStevens2008}, and classical groups when $p\neq 2$ \cite{Stevens2008}. Once one has a construction of types, the major thing that one needs to know in order to follow the template of the above proofs is that the type is associated to a point in the building, and that the Levi subgroup associated to this point should be equal to the Levi subgroup defining the corresponding inertial equivalence class; this is the case in the known constructions. The other point at which we make use of our hypotheses is when defining regular representations as representations with regular supercuspidal support in the sense of \cite{Kaletha2019}. Such an approach cannot be taken in the non-tame case, meaning that a more complicated approach to regularity must be taken; it seems unlikely that this would be the source of any major technical issues, however. In short, this appears to be an approach which should go through \emph{mutatis mutandis} in any situation where one has a satisfactory construction of $\frak{s}$-types for each inertial equivalence class $\frak{s}$. At the time of writing, however, the only such available constructions which are uniform in appearance as the group $G$ varies are those leading to tame representations.
\end{remarks}

\begin{remarks}
As mentioned during the introduction, this strong form of the inertial Langlands correspondence has been something of a problematic topic within the literature, with varying statements and a rather complex trail of references to be followed if one wishes to see a proof (which, ultimately, boils down to a lengthy but largely straightforward exercise in the Bernstein--Zelevinsky classification \cite{BernsteinZelevinsky1976}). The case of $\mathbf{GL}_n(G)$ is essentially treated as \cite[Prop. 6.2]{SchneiderZink1999}, although this is presented in a somewhat different language; \cite[Prop. 6.5.3]{BellaicheChenevier2009} describes how one may translate it into a statement closer to the viewpoint taken in this paper, and such an approach is also discussed in \cite[\S 4.1]{EmertonGee2014}.

However, while it appears to be a common expectation that such results should generalize beyond $\mathbf{GL}_n(F)$, to the author's knowledge the only other situation in which this has been fully worked out is that of $\mathbf{SL}_n(F)$ in an unpublished note by Will Conley.  At best, there is a widely believed folklore conjecture which predicts that there should be a canonical map $\Typ(G)\rightarrow\rH^1(I_F',G^\vee)$ (or, more likely, a map defined on some well-chosen subset of $\Typ(G)$). Such a result would be a considerable refinement of the results of this section, but this appears to be considerably beyond the reach of our current knowledge. In particular, any construction of such a map appears to have several serious pre-requisites:
\begin{enumerate}[(i)]
\item one must have a detailed understanding of how parabolically induced representations decompose into irreducible components;
\item one must have a detailed understanding of the set of typical representations; and
\item one must know that there exists a very precise correspondence between the set of typical representations and the set of representations with a given inertial support.
\end{enumerate}
The first of these, in particular, is a serious obstacle, with very little progress having been made beyond the Bernstein--Zelevinsky classification, apart from for certain families of representations of classical groups.

Our approach largely consists of the observation that, by being willing to work with sets of typical representations rather than singletons, it becomes much easier to show that the map $\iner_G$ is well-defined, with this problem reducing to a clear problem about the representation theory of $G$, which we resolve as Theorem \ref{thm:main}.

Nonetheless, one would hope that, at least in a great many cases, it is actually possible to refine the map $\Typ_\varphi(G)\rightarrow\rH^1(I_F',G^\vee)$ to a (partially defined) map $\Typ(G)\rightarrow\rH^1(I_F',G^\vee)$ and work with typical representations one-by-one. It would be extremely interesting to investigate this in some explicit situations, such as for classical groups where many parabolically induced representations have known decompositions.
\end{remarks}

\section{An explicit construction of typical representations}\label{sec:construction}

With the results of the previous section, we have a satisfactory construction of an inertial Langlands correspondence. However, if one wishes to apply this correspondence, then one must understand precisely the set $\Typ(G)$ of typical representations. In this final section, we provide a geometric construction of these typical representations, using Bruhat--Tits theory. 

We fix some notation for the remainder of the  paper. Let $\frak{s}\in\frak{B}(G)$ be an inertial equivalence class consisting of regular representations, and fix a datum $\Sigma=(\vec{G},M^0,u,\sigma,\vec{r},\vec{\phi})$ such that $(J(\Sigma),\lambda_\Sigma)$ is an $\frak{s}$-type. If $K$ is a maximal compact subgroup of $G$ such that $J(\Sigma)\subset K$, then the $\frak{s}$-typical representations of $K$ are (by definition) precisely the irreducible components of $\Ind_{J(\Sigma)}^K\ \lambda_\Sigma$. We will therefore describe each of these components. We make frequent use of results established during \cite[\S 7]{LathamNevins2020}, which will provide the technical tools necessary to carry out our construction.

From now on, let $x\in\buil(G)$ be a point contained in the simplicial closure of $u$, i.e. such that $G_u\subset G_x$. To state the next lemma, we recall the notion of a \emph{generalized $s_i$-facet} from \cite{DeBacker2002}. We say that two points $y,z\in\buil(G)$ lie in the same generalized $s_i$-facet of $\buil(G)$ if one has that $G_{y,s_i}=G_{z,s_i}$ and $G_{y,s_i+}=G_{z,s_i+}$. Note that, by replacing $u$ with a point with the same simplicial closure if necessary, it is always possible to arrange things so that $x$ lies in the same $s_i$-facet as $u$.

\begin{lemma}
Suppose that $x$ is contained in the same $s_i$-facet as $u$ for each $i$. Write $M^{x,0}$ for the Levi subgroup of $G^0$ associated to the point $x$ via \cite[6.3--4]{MoyPrasad1994}. Then $\Sigma_x:=(\vec{G},M^{x,0},x,\vec{r},\vec{\phi})$ is a truncated datum.
\end{lemma}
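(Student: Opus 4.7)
The plan is to check the axioms of a truncated datum one by one for $\Sigma_x$, using the fact that $\Sigma_x$ shares three of its five pieces of data with the original datum $\Sigma$.

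First I would observe that the sequence of tame twisted Levi subgroups $\vec{G}$, the sequence of depths $\vec{r}$, and the sequence of characters $\vec{\phi}$ are all inherited unchanged from $\Sigma$. Consequently, condition (i) (tame twisted Levi subgroups), condition (v) (the ordering of the reals $r_i$), and condition (vi) (the $G^{i+1}$-genericity of $\phi^i$ at depth $r_i$) hold automatically, because they were satisfied for $\Sigma$ and are intrinsic to $\vec{G}$, $\vec{r}$ and $\vec{\phi}$ rather than depending on the choice of base point in $\buil(G^0)$. (The genericity condition of \cite{HakimMurnaghan2008} is a property of a character of $G^i$ relative to the ambient $G^{i+1}$, and in particular makes no reference to a chosen vertex.)

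Next I would verify conditions (ii) and (iii), which are the only substantive statements left. That $M^{x,0}$ is a (genuine) Levi subgroup of $G^0$ is exactly the output of the Moy--Prasad construction in \cite[6.3--4]{MoyPrasad1994}, which assigns to any point $y \in \buil(G^0)$ a Levi $M^{y,0}$ of $G^0$ whose minimal split torus is the maximal split torus of $G^0$ acting trivially on the facet of $\buil(G^0)$ through $y$. That $x$ is a vertex of $\buil(M^{x,0})$ is likewise part of the same construction: $M^{x,0}$ is designed precisely so that, under the canonical embedding $\buil(M^{x,0}) \hookrightarrow \buil(G^0)$, the facet of $\buil(G^0)$ containing $x$ collapses to a single point, which is the vertex of $\buil(M^{x,0})$ corresponding to $x$. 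Since we are working in enlarged buildings, one passes through the canonical projection $\buil(M^{x,0}) \to \buil^{\red}(M^{x,0})$ and observes that the image $[x]$ is a vertex of the reduced building, so $x$ itself is a vertex of $\buil(M^{x,0})$ in our conventions.

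I do not expect a major obstacle: the lemma is essentially a bookkeeping statement certifying that the Moy--Prasad assignment $x \mapsto M^{x,0}$ plays well with the datum framework. The hypothesis that $x$ lies in the same generalized $s_i$-facet as $u$ for every $i$ does not enter into the verification of the axioms of a truncated datum per se; its role is to guarantee that the Moy--Prasad groups $G^i_{x,s_i}$ and $G^i_{x,s_i+}$ coincide with $G^i_{u,s_i}$ and $G^i_{u,s_i+}$, so that the characters $\hat\phi^i$ and the resulting groups $J(\Sigma_x)$, $J_+(\Sigma_x)$, $H_+(\Sigma_x)$ remain compatible with the corresponding constructions for $\Sigma$ in subsequent arguments. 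The only real content of the lemma itself is the application of the cited result from \cite{MoyPrasad1994}.
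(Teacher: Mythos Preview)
Your verification of axioms (i)--(vi) is fine as far as it goes, but you have missed the one point the paper identifies as nontrivial. Recall the Remark following the definition of a datum: part of the (suppressed) data is a \emph{generic diagram of embeddings of buildings} $\buil(G^i)\hookrightarrow\buil(G^{i+1})$, and the genericity condition depends on the base point. When one replaces $u$ by $x$, one must check that the fixed generic diagram for $\Sigma$ remains generic at $x$ (or can be adjusted to one that is). The paper's own proof says explicitly that ``the subtle point underlying this fact is that one may still construct a suitable generic diagram of embeddings,'' and defers the argument to \cite[7.3]{LathamNevins2020}. Your proposal does not address this at all, so it is incomplete in exactly the place where there is actual content.

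Relatedly, your diagnosis of the role of the $s_i$-facet hypothesis is off. You assert that it ``does not enter into the verification of the axioms of a truncated datum per se'' and is only needed for later compatibility statements. In fact it is precisely this hypothesis that is used (via the cited result in \cite{LathamNevins2020}) to ensure that the diagram of embeddings is still generic at $x$: the genericity conditions of Kim--Yu are formulated in terms of the Moy--Prasad filtrations $G^i_{y,s_i}$ and $G^i_{y,s_i+}$, and requiring $x$ and $u$ to lie in the same generalized $s_i$-facet for every $i$ is exactly what forces these filtration groups to agree, so that genericity transfers from $u$ to $x$. So the hypothesis is not merely for downstream compatibility; it is what makes the lemma true.
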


\begin{proof}
The subtle point underlying this fact is that one may still construct a suitable generic diagram of embeddings; this is addressed in detail during \cite[7.3]{LathamNevins2020}, where our point $u$ plays the same role as the point denoted by $u_\xi$ in \emph{loc. cit.} Our result is slightly simpler than that of \emph{loc. cit.}, in that we wish only to construct a truncated datum, meaning that the details underlying the construction of $\zeta_\xi$ are irrelevant for our purposes. Otherwise, the proof goes through \emph{mutatis mutandis}.
\end{proof}

Since $u$ is contained in $\buil(M^{x,0})$, the group $M_{u,0:0+}^{x,0}=M_{u,0:0+}^0$ identifies with a Levi subgroup of $M_{x,0:0+}^{x,0}$. Moreover, $M_{u,0}^{x,0}/M_{x,0+}^{x,0}$ is a parabolic subgroup of $M_{x,0:0+}^{x,0}$ with $M_{u,0:0+}^{x,0}$ as its Levi factor. In particular, we may consider the parabolically induced representation $V_{x,\sigma}:=\Ind_{M_{u,0}^{x,0}/M_{x,0+}^{x,0}}^{M_{x,0:0+}^{x,0}}\ \sigma$. Let $\zeta$ be an irreducible component of $V_{x,\sigma}$.

Then we may consider the tuple $\Sigma_{x,\zeta}:=(\vec{G},M^0,x,\zeta,\vec{r},\vec{\phi})$. Note that, while $\Sigma_x$ is a truncated datum, $\Sigma_{x,\zeta}$ is \emph{not} a datum unless $x$ is contained in the interior of the same facet as $u$. Nonetheless, we abuse notation slightly and associate groups, semisimple characters and Heisenberg--Weil representations to $\Sigma_{x,\zeta}$ via $\Sigma_x$. That is, we set $J(\Sigma_{x,\zeta})=J(\Sigma_x)$, $H_+(\Sigma_{x,\zeta})=H_+(\Sigma_x)$, $\theta_{\Sigma_{x,\zeta}}=\theta_{\Sigma_x}$, and $\kappa_{\Sigma_{x,\zeta}}=\kappa_{\Sigma_x}$. We also set $\lambda_{\Sigma_{x,\zeta}}=\zeta\otimes\kappa_{\Sigma_{x,\zeta}}$. The pair $(J(\Sigma_{x,\zeta}),\lambda_{\Sigma_{x,\zeta}})$ is always $\frak{s}$-typical, but it need not be an $\frak{s}$-type.

\begin{lemma}
One has $J(\Sigma_{x,\zeta})\supset J(\Sigma)$ and $H_+(\Sigma_{x,\zeta})\subset H_+(\Sigma)$. Moreover, $\theta|_{H_+(\Sigma_{x,\zeta})}=\theta_{\Sigma_{x,\zeta}}$, and the restriction to $J(\Sigma)$ of $\kappa_{\Sigma_{x,\zeta}}$ is $\kappa_\Sigma$-isotypic.
\end{lemma}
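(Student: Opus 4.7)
The plan is that the first three assertions follow from a direct comparison of the defining product decompositions at $u$ and $x$ using the Moy--Prasad filtration, while the statement about the Heisenberg--Weil lifts is the substantive step, requiring a compatibility of the Kim--Yu construction under enlargement of the underlying pro-$p$ group.

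For the subgroup inclusions, the hypothesis $G_u\subset G_x$ immediately yields $G_u^0\subset G_x^0$ by intersection with $G^0$, while the standard behaviour of the depth-zero pro-$p$ radical under specialization gives the reverse inclusion $G_{x,0+}^0\subset G_{u,0+}^0$. For each $i\geq 0$, the assumption that $x$ and $u$ lie in the same generalized $s_i$-facet of $\buil(G)$ gives the equalities $G_{u,s_i}=G_{x,s_i}$ and $G_{u,s_i+}=G_{x,s_i+}$, and intersecting with the twisted Levi $G^{i+1}$ (using the standard identification $G^{i+1}_{y,r}=G^{i+1}\cap G_{y,r}$ for $y\in\buil(G^{i+1})$) descends these equalities to $G^{i+1}$. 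Feeding the resulting comparisons factor-by-factor into the product formulas for $J$ and $H_+$ recalled in Section~\ref{sec:kimyu} immediately yields $J(\Sigma)\subset J(\Sigma_{x,\zeta})$ and $H_+(\Sigma_{x,\zeta})\subset H_+(\Sigma)$.

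For the character equality, the tuple $\vec{\phi}$ is the same in both data, and the extension $\hat{\phi}^i$ of $\phi^i|_{G^i_{\bullet,s_i+}}$ to $H_+$ is constructed canonically from $\phi^i$ and the filtration subgroups. For $i\geq 1$ these filtration subgroups coincide at $u$ and $x$, so the restriction of the $\Sigma$-version of $\hat{\phi}^i$ to $H_+(\Sigma_{x,\zeta})$ agrees with its $\Sigma_{x,\zeta}$-version; for the depth-zero factor, both characters are simply $\phi^0$ restricted to their respective pro-$p$ groups, and the identity is tautological via the inclusion $G_{x,0+}^0\subset G_{u,0+}^0$. Tensoring over $i$ gives $\theta_\Sigma|_{H_+(\Sigma_{x,\zeta})}=\theta_{\Sigma_{x,\zeta}}$.

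For the Heisenberg--Weil claim, the strategy is to exploit the characterization of $\kappa_\Sigma$ recalled in Section~\ref{sec:kimyu}, which determines it up to isomorphism as an irreducible representation of $J(\Sigma)$ whose restriction to $H_+(\Sigma)$ is $\theta_\Sigma$-isotypic. It then suffices to show that every $J(\Sigma)$-irreducible constituent of $\kappa_{\Sigma_{x,\zeta}}|_{J(\Sigma)}$ has this isotypic property (with the correct dimension). Decomposing $\kappa_{\Sigma_{x,\zeta}}=\bigotimes_i\kappa_{\Sigma_{x,\zeta}}^i$ and arguing one factor at a time reduces the problem to showing that each $\kappa_{\Sigma_{x,\zeta}}^i|_{H_+(\Sigma)}$ is $\hat{\phi}^i$-isotypic. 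The preceding paragraph already supplies this after further restriction to $H_+(\Sigma_{x,\zeta})$, so the essential task is to rule out the appearance of characters of $H_+(\Sigma)$ other than $\hat{\phi}^i$ that restrict to $\hat{\phi}^i|_{H_+(\Sigma_{x,\zeta})}$. This is where the main obstacle lies: the quotient $H_+(\Sigma)/H_+(\Sigma_{x,\zeta})$ sits inside the depth-zero factor $G_{u,0+}^0/G_{x,0+}^0$, and controlling its action on the $\theta_{\Sigma_{x,\zeta}}$-isotypic space of $\kappa_{\Sigma_{x,\zeta}}^i$ requires a careful analysis of the underlying symplectic structures. I expect this to follow from the $G^{i+1}$-genericity of $\phi^i$ combined with the compatibility results developed in \cite[\S 7]{LathamNevins2020}, where the requisite functoriality of the Kim--Yu construction under passage to a nearby truncated datum has effectively been worked out.
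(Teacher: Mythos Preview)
Your proposal and the paper's proof ultimately land in the same place: the paper simply cites \cite[7.5]{LathamNevins2020} for all four assertions, while you attempt direct arguments for the first three and then defer to \cite[\S 7]{LathamNevins2020} for the Heisenberg--Weil compatibility. So the substantive content is the same; the difference is expository.

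Two points are worth flagging in your direct arguments. First, the descent of the generalized $s_i$-facet condition from $\buil(G)$ to $\buil(G^{i+1})$ is not automatic from the bare identity $G^{i+1}_{y,r}=G^{i+1}\cap G_{y,r}$, which itself requires the tameness hypotheses and a compatible choice of embeddings of buildings; this is precisely the sort of issue that the generic diagram of embeddings (and the careful analysis in \cite[\S 7]{LathamNevins2020}) is designed to handle, so your factor-by-factor comparison is morally right but is already implicitly invoking the cited reference. Second, for the Heisenberg--Weil step, your reduction is accurate---the obstruction really does live in the depth-zero quotient $G_{u,0+}^0/G_{x,0+}^0$---but passing from ``$\theta_\Sigma$-isotypic on $H_+(\Sigma)$'' to ``$\kappa_\Sigma$-isotypic on $J(\Sigma)$'' also requires matching dimensions (or equivalently verifying the Heisenberg relation), not just the isotypy of the restriction; this too is part of what is established in the cited proof. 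In short, your outline is sound, but the technical work you are gesturing at is exactly what the paper is citing, so there is no genuine divergence in approach.
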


\begin{proof}
These claims are established during the course of the proof of \cite[7.5]{LathamNevins2020}; as before, our point plays the role of the point denoted by $u_\xi$ in \emph{loc. cit.}
\end{proof}

In particular, from this we are able to immediately deduce the following.

\begin{lemma}\label{lem:isotypic}
For each pair $(x,\zeta)$ as above, one has that
\[\Hom_{J(\Sigma)}(\lambda_\Sigma,\lambda_{\Sigma_{x,\zeta}})\neq 0.
\]
\end{lemma}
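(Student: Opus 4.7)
The plan is to exploit the preceding lemma in order to reduce the claim to a nonvanishing statement concerning only the depth-zero components $\sigma$ and $\zeta$. First I would observe that the $\kappa_\Sigma$-isotypic decomposition of $\kappa_{\Sigma_{x,\zeta}}|_{J(\Sigma)}$ provides a nonzero $J(\Sigma)$-equivariant embedding $g : \kappa_\Sigma \hookrightarrow \kappa_{\Sigma_{x,\zeta}}|_{J(\Sigma)}$. Consequently, if I can also produce a nonzero $J(\Sigma)$-equivariant map $f : \sigma \to \zeta|_{J(\Sigma)}$, then the tensor product $f \otimes g$ is a nonzero element of
\[
\Hom_{J(\Sigma)}\bigl(\sigma \otimes \kappa_\Sigma,\ \zeta|_{J(\Sigma)} \otimes \kappa_{\Sigma_{x,\zeta}}|_{J(\Sigma)}\bigr) = \Hom_{J(\Sigma)}(\lambda_\Sigma, \lambda_{\Sigma_{x,\zeta}}),
\]
which is exactly the desired nonvanishing. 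The entire problem therefore reduces to constructing such an $f$.

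To produce $f$, I would invoke the defining property of $\zeta$: it is an irreducible component of the induced representation $V_{x,\sigma} = \Ind_P^{M_{x,0:0+}^{x,0}}\ \sigma$, where $P = M_{u,0}^{x,0}/M_{x,0+}^{x,0}$ is a parabolic subgroup of $M_{x,0:0+}^{x,0}$ whose Levi factor $L := M_{u,0:0+}^{x,0} \cong G_{u,0:0+}^0$ is precisely the reductive quotient that carries $\sigma$. Frobenius reciprocity for the finite group $M_{x,0:0+}^{x,0}$, applied to the inclusion $\zeta \hookrightarrow V_{x,\sigma}$, yields a nonzero $P$-equivariant map $\zeta|_P \twoheadrightarrow \sigma$; by Maschke's theorem this splits, so $\sigma$ appears as an $L$-subrepresentation of $\zeta|_L$. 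Via the canonical Moy--Prasad identification of $L$ with $J(\Sigma)/J_+(\Sigma)$, this inclusion is exactly the sought-after $f$.

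The main obstacle will be checking that the two ways of restricting $\zeta$ in play above are genuinely compatible. On one hand, $\zeta|_{J(\Sigma)}$ is defined by inflating $\zeta$ through the quotient $J(\Sigma_{x,\zeta})/J_+(\Sigma_{x,\zeta}) \cong G_x^0/G_{x,0+}^0$ and then restricting to $J(\Sigma)$; on the other, $\zeta|_L$ comes from viewing $L$ as a Levi subgroup of $M_{x,0:0+}^{x,0}$. One must verify that the image of $J(\Sigma)$ in $J(\Sigma_{x,\zeta})/J_+(\Sigma_{x,\zeta})$ identifies, under the Moy--Prasad machinery, with $L \subset M_{x,0:0+}^{x,0} \subset G_{x,0:0+}^0$, in a manner compatible with the original identification $J(\Sigma)/J_+(\Sigma) \cong G_u^0/G_{u,0+}^0$. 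This is essentially a bookkeeping exercise with generalized $s_i$-facets and the Moy--Prasad isomorphisms at the pair of points $u$ and $x$; once settled, the rest of the argument goes through directly.
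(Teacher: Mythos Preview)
Your approach is correct and is precisely the natural way to make explicit what the paper leaves implicit: the paper simply asserts that the lemma is an immediate consequence of the preceding one, without spelling out the argument. Your decomposition into the Heisenberg--Weil part (handled directly by the $\kappa_\Sigma$-isotypic statement of the preceding lemma) and the depth-zero part (handled by Frobenius reciprocity for the finite parabolic induction defining $V_{x,\sigma}$) is exactly the intended unpacking, and the compatibility check you flag is indeed routine bookkeeping with the Moy--Prasad identifications at $u$ and $x$.
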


Equivalently, via Frobenius reciprocity, we may identify $\Ind_{J(\Sigma_{x,\zeta})}^K \lambda_{\Sigma_{x,\zeta}}$ with a subrepresentation of $\Ind_{J(\Sigma)}^K\ \lambda_\Sigma$ whenever $K\supset J(\Sigma_{x,\zeta})\supset J(\Sigma)$.

We equip the set of pairs $(x,\zeta)$ as above with a partial order $\preceq$, where we declare that $(x,\zeta)\preceq(x',\zeta')$ if and only if $x'$ is contained in the simplicial closure of $x$ and $\zeta'$ is contained in $V_{x,\zeta}$. As a consequence of Lemma \ref{lem:isotypic}, we obtain the following geometric structure on $\Ind_{J(\Sigma)}^K\ \lambda_\Sigma$.

\begin{lemma}\label{lem:injective}
For any maximal compact subgroup $K$ of $G$ which contains $J(\Sigma)$, the set of representations $\Ind_{J(\Sigma_{x,\zeta})}^K\ \lambda_{\Sigma_{x,\zeta}}$ for which $G_x\subset K$ forms an injective system via the canonical inclusion maps
\[\Ind_{J(\Sigma_{x,\zeta})}^K\ \lambda_{\Sigma_{x,\zeta}}\hookrightarrow\Ind_{J(\Sigma_{x',\zeta'})}^K\ \lambda_{\Sigma_{x',\zeta'}},
\]
which exist whenever $(x,\zeta)\preceq(x',\zeta')$. In particular, one has the identity
\[\Ind_{J(\Sigma)}^K\ \lambda_\Sigma=\varinjlim\Ind_{J(\Sigma_{x,\zeta})}^K\ \lambda_{\Sigma_{x,\zeta}}.
\]
\end{lemma}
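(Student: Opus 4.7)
The plan is to prove three things in succession: existence of the inclusion maps, compatibility so that they form a directed system, and identification of the colimit. For the first, my strategy is to bootstrap Lemma \ref{lem:isotypic}. Given $(x,\zeta) \preceq (x',\zeta')$, Frobenius reciprocity relative to the inclusions $J(\Sigma_{x,\zeta}) \subset J(\Sigma_{x',\zeta'}) \subset K$ reduces the existence of the map to producing a nonzero $J(\Sigma_{x,\zeta})$-equivariant map $\lambda_{\Sigma_{x,\zeta}} \to \lambda_{\Sigma_{x',\zeta'}}|_{J(\Sigma_{x,\zeta})}$. The building-theoretic ingredients one needs --- $J(\Sigma_{x,\zeta}) \subset J(\Sigma_{x',\zeta'})$, the opposite containment $H_+(\Sigma_{x',\zeta'}) \subset H_+(\Sigma_{x,\zeta})$, compatibility of the semisimple characters, and $\kappa_{\Sigma_{x'}}|_{J(\Sigma_{x})}$ being $\kappa_{\Sigma_{x}}$-isotypic --- should all follow by replaying the argument of \cite[\S 7]{LathamNevins2020} with $(u,\sigma)$ replaced by $(x,\zeta)$ and $(x,\zeta)$ replaced by $(x',\zeta')$, since those arguments depend only on the geometric positions of the two points and on the generic diagram of embeddings rather than on the depth-zero representation. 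Combined with the fact that, by Frobenius reciprocity for parabolic induction of finite groups of Lie type, the hypothesis $\zeta' \subset V_{x,\zeta}$ implies that $\zeta$ occurs in the restriction of $\zeta'$ to the relevant parabolic $M_{x,0}^{x',0}/M_{x',0+}^{x',0}$, this yields the required map.

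Step two, compatibility of the inclusions, is essentially formal: every injection can be realised as an inclusion of subrepresentations of the ambient $\Ind_{J(\Sigma)}^K\ \lambda_\Sigma$ via Lemma \ref{lem:isotypic}, and compatibility under composition reduces to transitivity of subrepresentation containment in this common target. For step three, identification of the colimit amounts to showing that the union of the subrepresentations exhausts $\Ind_{J(\Sigma)}^K\ \lambda_\Sigma$; since $K$ is compact and $\lambda_\Sigma$ is finite-dimensional, the ambient representation has finite length and it suffices to show that each of its irreducible $K$-constituents occurs in some $\Ind_{J(\Sigma_{x,\zeta})}^K\ \lambda_{\Sigma_{x,\zeta}}$. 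The natural candidate is to consider the maximal elements of the partial order --- pairs $(x,\zeta)$ for which $V_{x,\zeta}$ is already irreducible --- and to verify that $\Ind_{J(\Sigma_{x,\zeta})}^K\ \lambda_{\Sigma_{x,\zeta}}$ is then itself irreducible, so that these pairs collectively cover all irreducible constituents of the ambient induction.

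The main obstacle will be this last claim. While existence and compatibility of the inclusions reduce to iterating the Kim--Yu formalism already in place, showing that the maximal pairs exhaust the irreducible constituents requires precise control of how the parabolic inductions $V_{x,\zeta}$ at the finite reductive-quotient level branch under smooth induction to $K$. I expect this to proceed by induction on the dimension of the facet containing $x$, together with a length count matching the total length of $\Ind_{J(\Sigma)}^K\ \lambda_\Sigma$ to the sum of the lengths of the $V_{x,\zeta}$ ranging over maximal $(x,\zeta)$; the delicate input is a Mackey/intertwining calculation ensuring that the Kim--Yu type at a maximal pair is ``saturated'' in $K$ in the sense that no further constituents are introduced by induction beyond those already present in $\zeta$.
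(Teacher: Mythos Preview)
Your Steps 1 and 2 are essentially what the paper does: it presents Lemma~\ref{lem:injective} with no proof, declaring it an immediate consequence of Lemma~\ref{lem:isotypic}. Your observation that one must replay the argument of \cite[\S 7]{LathamNevins2020} with $(u,\sigma)$ replaced by the intermediate pair $(x,\zeta)$ is exactly the implicit content here, and your remark that compatibility is formal (all terms sit canonically inside the single ambient representation $\Ind_{J(\Sigma)}^K\lambda_\Sigma$) matches the paper's viewpoint.

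Where you diverge is Step 3. You propose to identify the colimit by proving that the maximal pairs $(x,\zeta)$ yield \emph{irreducible} inductions which together exhaust the irreducible constituents of $\Ind_{J(\Sigma)}^K\lambda_\Sigma$, via an intertwining/Mackey calculation and a length count. In the paper this is not part of Lemma~\ref{lem:injective} at all: it is precisely the content of the \emph{subsequent} Proposition~\ref{prop:irreducibles}, proved there by reducing the intertwining calculation $I_K(\lambda_{\Sigma_{x,\zeta}})=J(\Sigma_{x,\zeta})$ to the depth-zero statement $I_{K\cap G^0}(\zeta)=K\cap G^0_{[x]}$ and invoking \cite[\S 8]{KimYu2017}, \cite[\S 15]{Yu2001}. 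For the lemma itself the paper regards the colimit identity as formal: the pair $(u,\sigma)$ is itself a member of the indexing set (with $\Sigma_{u,\sigma}=\Sigma$), so $\Ind_{J(\Sigma)}^K\lambda_\Sigma$ already occurs as one of the terms, and every other term is a subrepresentation of it by the sentence immediately following Lemma~\ref{lem:isotypic}. Thus your plan is not wrong, but it conflates the lemma with the proposition that follows it; the ``delicate input'' you anticipate is supplied later and is not needed to establish the lemma as stated.
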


In fact, via this system we are able to exhaustively describe the isomorphism classes of irreducible components of $\Ind_{J(\Sigma)}^K\ \lambda_\Sigma$ (although not necessarily the multiplicity of each such isomorphism class).

\begin{proposition}\label{prop:irreducibles}
Suppose that $(x,\zeta)$ is maximal with respect to $\preceq$ and $G_x\subset K$. Then the representation $\lambda_{\Sigma_{x,\zeta}}$ of $J(\Sigma_{x,\zeta})$ is irreducible.

Moreover, every irreducible component of $\Ind_{J(\Sigma)}^K\ \lambda_\Sigma$ is isomorphic to a representation of the form $\Ind_{J(\Sigma_{x,\zeta})}\ \lambda_{\Sigma_{x,\zeta}}$ for some pair $(x,\zeta)$ which is maximal with respect to $\preceq$.
\end{proposition}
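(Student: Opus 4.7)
The proposition has two parts: for $(x,\zeta)$ maximal in $\preceq$ with $G_x \subset K$, first that $\lambda_{\Sigma_{x,\zeta}}$ is irreducible on $J(\Sigma_{x,\zeta})$, and second that every irreducible component of $\Ind_{J(\Sigma)}^K\ \lambda_\Sigma$ is isomorphic to some $\Ind_{J(\Sigma_{x,\zeta})}^K\ \lambda_{\Sigma_{x,\zeta}}$ with $(x,\zeta)$ maximal. The plan is to handle these in tandem: match maximality in $\preceq$ to irreducibility at the reductive-quotient level, and then lift through the Heisenberg--Weil construction and the injective system of Lemma \ref{lem:injective}.

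For the irreducibility of $\lambda_{\Sigma_{x,\zeta}}$ I would decompose it as $\zeta \otimes \kappa_{\Sigma_{x,\zeta}}$ and exploit that $\kappa_{\Sigma_{x,\zeta}} = \kappa_{\Sigma_x}$ is irreducible with $\theta_{\Sigma_x}$-isotypic restriction to $H_+(\Sigma_x)$. A Clifford-theoretic argument of the type standard in \cite{Yu2001,KimYu2017} then reduces the question to irreducibility of $\zeta$ once regarded as a representation of $J(\Sigma_{x,\zeta})/J_+(\Sigma_{x,\zeta})$. The maximality of $(x,\zeta)$ in $\preceq$ is the precise statement that no further parabolic refinement of $\zeta$ is available at any more-special $x'$ in the simplicial closure of $x$; through the Moy--Prasad machinery, this should force $M^{x,0}_{x,0:0+}$ to exhaust $J(\Sigma_{x,\zeta})/J_+(\Sigma_{x,\zeta})$ at the reductive-quotient level, at which point $\zeta$ is genuinely irreducible there and hence so is $\lambda_{\Sigma_{x,\zeta}}$.

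For the exhaustiveness claim I would use Lemma \ref{lem:injective} to write $\Ind_{J(\Sigma)}^K\ \lambda_\Sigma = \varinjlim_{(x,\zeta)} W_{x,\zeta}$ with $W_{x,\zeta} := \Ind_{J(\Sigma_{x,\zeta})}^K\ \lambda_{\Sigma_{x,\zeta}}$. Since $K/J(\Sigma)$ is finite the representation is semisimple, so any irreducible constituent $\pi$ lies inside some $W_{x_0,\zeta_0}$. The relevant portion of $\preceq$ is a finite poset --- facets in the simplicial closure of $u$ contained in $K$ are finitely many, and each carries only finitely many components of the corresponding $V_{x,\sigma}$ --- so I can enlarge to a maximal $(x,\zeta) \succeq (x_0,\zeta_0)$ still satisfying $\pi \subset W_{x,\zeta}$. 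Combining the irreducibility of $\lambda_{\Sigma_{x,\zeta}}$ from the first step with the cover-style argument deployed in the proof of Theorem \ref{thm:main} to deduce irreducibility of $\Ind_{J(\Sigma)}^{G_\mathscr{F}}\ \lambda_\Sigma$ from that of $\Ind_{J(\Sigma)\cap M}^{M_{\mathscr{F}}}\ \lambda_\Sigma$, I would conclude that $W_{x,\zeta}$ itself is irreducible, whence $\pi \simeq W_{x,\zeta}$.

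The hardest step will be the precise translation between maximality in $\preceq$ and the two irreducibility statements. Each refinement step in $\preceq$ enlarges the reductive quotient at $x$, and at intermediate stages $\zeta$ genuinely parabolically induces reducibly from a sub-Levi to a larger one; proving that maximality is exactly the obstruction thereby removed --- and moreover that at maximal $(x,\zeta)$ no further decomposition arises when inducing from $J(\Sigma_{x,\zeta})$ up to $K$ --- will require combining the fine Moy--Prasad bookkeeping developed in \cite[\S 7]{LathamNevins2020} with a careful Mackey-theoretic analysis of how $\kappa_{\Sigma_{x,\zeta}}$ interacts with the reductive-quotient-level parabolic inductions that define $V_{x,\zeta}$ and $\preceq$.
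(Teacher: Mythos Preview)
Your overall strategy is sensible, but there is a genuine gap at the crucial step, and your first part is slightly misdirected.

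For the first claim, note that $\zeta$ is \emph{by definition} an irreducible component of $V_{x,\sigma}$, so it is already irreducible as a representation of $M^{x,0}_{x,0:0+}$ and hence (after inflation) of $J(\Sigma_{x,\zeta})/J_+(\Sigma_{x,\zeta})$. The irreducibility of $\lambda_{\Sigma_{x,\zeta}}=\zeta\otimes\kappa_{\Sigma_{x,\zeta}}$ then follows from the standard Yu/Kim--Yu Clifford-theoretic argument without any appeal to maximality. Your discussion of maximality ``forcing $M^{x,0}_{x,0:0+}$ to exhaust the quotient'' is therefore not doing real work at this stage.

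The substantive use of maximality is in the second claim, where you need the \emph{induced} representation $\Ind_{J(\Sigma_{x,\zeta})}^K\ \lambda_{\Sigma_{x,\zeta}}$ to be irreducible. Here your proposed ``cover-style argument deployed in the proof of Theorem~\ref{thm:main}'' does not apply: that argument relies on $(J(\Sigma),\lambda_\Sigma)$ being a $G$-cover of a supercuspidal type on $M$, but the paper explicitly notes that $(J(\Sigma_{x,\zeta}),\lambda_{\Sigma_{x,\zeta}})$ is $\frak{s}$-typical and \emph{need not be an $\frak{s}$-type}, so it has no Iwahori decomposition or cover structure to exploit. Moreover, the cover argument in Theorem~\ref{thm:main} only controls induction up to $G_{\mathscr{F}}$, not up to an arbitrary maximal compact $K$.

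The paper instead proceeds by an intertwining calculation: using \cite[\S 8]{KimYu2017} and \cite[\S 15]{Yu2001}, the irreducibility of $\Ind_{J(\Sigma_{x,\zeta})}^K\ \lambda_{\Sigma_{x,\zeta}}$ reduces to showing $I_{K\cap G^0}(\zeta)=K\cap G^0_{[x]}$. The key geometric observation---which your proposal is missing---is that maximality of $(x,\zeta)$ with respect to $\preceq$ forces $x$ to be a \emph{vertex} of $\buil(G^0)$; hence $G_x^0$ is a maximal compact subgroup of $G^0$, and since $K\cap G^0$ is compact and contains $G_x^0$ one has $K\cap G^0=G_x^0$. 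The required intertwining identity is then immediate. This is where maximality actually enters, and it is a much more direct route than the one you sketch.
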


\begin{proof}
We freely make use of the results of \cite[\S 8]{KimYu2017} and \cite[\S 15]{Yu2001}. By carrying out the standard inductive argument from \emph{loc. cit.}, one finds that in order to show that $I_K(\lambda_{\Sigma_{x,\zeta}})$, the set of $k\in K$ which intertwine $\lambda_{\Sigma_{x,\zeta}}$ with itself, is equal to $J(\Sigma_{x,\zeta})$, from which the irreducibility of $\Ind_{J(\Sigma_{x,\zeta})}^K\ \lambda_{\Sigma_{x,\zeta}}$ follows, it suffices to show that $I_{K\cap G^0}(\zeta)=K\cap G_{[x]}^0$. Since $K$ is a maximal compact subgroup of $G$ which contains $G_x$, the group $K\cap G^0$ is a compact subgroup of $G^0$ which contains $G_x^0$. However, we know that $(x,\zeta)$ is maximal with respect to $\preceq$, which implies that $x$ is a vertex when viewed as a point in $\buil(G^0)$, i.e. that $G_x^0$ is a maximal compact subgroup of $G^0$, which implies that $K\cap G^0=G_x^0$. So, indeed, it must be the case that $I_{K\cap G^0}(\zeta)=K\cap G_{[x]}^0$, from which it follows that $\Ind_{J(\Sigma_{x,\zeta})}^K\ \lambda_{\Sigma_{x,\zeta}}$ is irreducible.

In order to see that every irreducible component of $\Ind_{J(\Sigma)}^K\ \lambda_\Sigma$ arises in such a way, it remains for us to observe that it follows from the construction of the various representations $\kappa_{\Sigma_{x,\zeta}}$ as Heisenberg--Weil lifts of $\theta_{\Sigma_{x,\zeta}}$ that $\Ind_{J(\Sigma)}^K\ \lambda_\Sigma$ decomposes as a sum of representations of the form $\Ind_{J(\Sigma_{x,\zeta})}^K\ \lambda_{\Sigma_{x,\zeta}}$, for $(x,\zeta)$ maximal. In particular, the set of representations of the form $\Ind_{J(\Sigma_{x,\zeta})}^K\ \lambda_{\Sigma_{x,\zeta}}$ exhausts the set of isomorphism classes of irreducible components of the representation
\[(\Ind_{J(\Sigma)}^K\ \sigma)\otimes(\Ind_{J(\Sigma)}^K\ \kappa_\Sigma);
\]
since there is a canonical injective map
\[\Ind_{J(\Sigma)}^K\ \lambda_\Sigma\rightarrow(\Ind_{J(\Sigma)}^K\ \sigma)\otimes(\Ind_{J(\Sigma)}^K\ \kappa_\Sigma),
\]
the claim follows.
\end{proof}

\bibliographystyle{amsalpha}
\addcontentsline{toc}{chapter}{Bibliography}
\bibliography{padicrefs}

\end{document}